\DeclareMathAlphabet{\mathpzc}{OT1}{pzc}{m}{it}
\DeclareMathOperator{\rk}{rk \,}          
\DeclareMathOperator{\End}{End\,}           
\DeclareMathOperator{\Id}{Id\,}             
\DeclareMathOperator{\U}{U}
\DeclareMathOperator{\SU}{SU}
\DeclareMathOperator{\Spin}{Spin}
\DeclareMathOperator{\Sp}{Sp}
\DeclareMathOperator{\SO}{SO}
\DeclareMathOperator{\Tr}{Tr\,}       
\newtheorem{thm}{Theorem}
\newtheorem{prop}[thm]{Proposition}
\newtheorem{lemma}[thm]{Lemma}
\newtheorem{cor}[thm]{Corollary}
\theoremstyle{definition}
\newtheorem{defn}[thm]{Definition}
\theoremstyle{remark}
\newtheorem{rmk}[thm]{Remark}
\theoremstyle{remark}
\newcommand{\vol}{\mathrm{vol}}
\newcommand{\bd}{\partial}
\newcommand{\bbd}{\bar\partial}
\newcommand{\ox}{\otimes}
\newcommand{\la}{\langle}
\newcommand{\ra}{\rangle}
\newcommand{\cF}{\mathcal{F}}
\newcommand{\cI}{\mathcal{I}}
\newcommand{\cL}{\mathcal{L}}
\newcommand{\cU}{\mathcal{U}} 
\newcommand{\x}{\times}
\newcommand{\CC}{\mathbb{C}} 
\newcommand{\QQ}{\mathbb{Q}} 
\newcommand{\HH}{\mathbb{H}} 
\newcommand{\RR}{\mathbb{R}} 
\newcommand{\ZZ}{\mathbb{Z}} 
\newcommand{\too}{\longrightarrow}
\newcommand{\imat}{\sqrt{-1}} 
\newcommand{\CP}{\CC P}
\renewcommand{\Re}{\mathrm{Re}}
\author[V. Mu\~{n}oz]{Vicente Mu\~{n}oz}
\begin{document}

\title{On rotation of complex structures}

\thanks{This work has been partially supported by (Spanish) MICINN Project MTM2010-17389.}

\begin{abstract}
 We put in a general framework the situations in which a Riemannian manifold admits a 
family of compatible complex structures, including hyperk\"ahler metrics and the
$\Spin$-rotations of \cite{Mu}. We determine the (polystable) holomorphic bundles which are rotable, i.e., they
remain holomorphic when we change a complex structure by a different one in the family.

\vspace{1cm}

\noindent {Email: {\tt vicente.munoz@mat.ucm.es}}

\noindent Tel: +34 913944464 

\noindent Fax: +34 913944564

\noindent {Address: Facultad de Matem\'aticas, Universidad Complutense de Madrid,
Plaza Ciencias 3, 28040 Madrid, Spain}
\end{abstract}

\maketitle

\section{Introduction}

Hyperk\"ahler manifolds admit an $S^2$ family  
of complex structures, all of them integrable and compatible
with the metric. This produces a collection of different complex manifolds, all of them
naturally related, but very often with different algebro-geometric properties. For example, 
it is typical that some of the manifolds in the family are algebraic and others are not. Other 
properties, like the Hodge structures, also change in the family. 

There are other situations in which a Riemannian manifold admits a family of compatible complex
structures, like the $\SU(4)$-structures compatible with a $\Spin(7)$-structure on the $8$-torus,
studied in \cite{Mu}. This consists of an $S^6$ family of complex structures, that is,
a family of complex $4$-tori all of them naturally related, and again with very different
algebro-geometric properties. Indeed, in \cite{Mu} there is an example of an abelian
variety $X$ with $\End(X)=\QQ[\sqrt{-d}] \x\QQ[\sqrt{-d}]$, $d\in \ZZ_{>0}$ square-free, 
and another abelian
variety $X'$ in the same family with $\End(X') 
=\QQ[\sqrt{-d},\sqrt{e}]$, $d,e\in\ZZ_{>0}$ square-free.
Also, it is typical that some of the complex $4$-tori in the family are algebraic whereas 
others are not.

In the present note, we aim to put both previous examples in a general framework. Moreover, we shall
describe other instances of the same phenomena, like the case of the product of two K3 surfaces.

\medskip

Let $E\to M$ be a (hermitian) complex vector bundle over a K\"ahler manifold $(M,\omega)$. Then $E$ admits a
Hermitian-Yang-Mills connection (HYM connection, for short) if there is a hermitian connection $A$
such that  
 $$
 \left\{ \begin{array}{l} F_A \in \bigwedge\nolimits^{1,1} (\End E) \\
  \Lambda F_A= \lambda \, \Id \end{array} \right.
 $$
for a constant $\lambda$, where $\Lambda:\bigwedge^2 \to \bigwedge^0$ denotes contraction with $\omega$.
Decomposing $A=\bd_A+\bbd_A$ into $(1,0)$ and $(0,1)$-components, we have that $\bbd_A$ is a holomorphic
structure on $E$ and moreover $(E,\bbd_A)$ is a polystable bundle with respect to $\omega$ (a direct sum
of stable bundles all of the same slope). The reciprocal also holds: a polystable bundle with respect
to $\omega$ admits a HYM connection. This is the content of the Hitchin-Kobayashi correspondence \cite{UY}.

If $M$ admits a family of complex structures compatible with the given metric, then 
$E\to M$ might be HYM with respect to all (or a subfamily) of the K\"ahler structures simultaneously.
In the case of hyperk\"ahler manifolds, such bundles are called hyperholomorphic and
have been extensively studied by Verbitsky \cite{Verbitsky}. In the case of complex $4$-dimensional
tori with $\Spin(7)$-structures, such bundles have been described in \cite{Mu}, where they
are called $\Spin$-rotable bundles. 

A bundle $E$ which is HYM with respect to different complex structures in one of these families 
is an interesting object, since it determines holomorphic bundles for different complex structures
on the given (smooth) manifold $M$. Here, we shall called such bundles \emph{rotable}.
In particular, the Chern classes of a rotable bundle $E$ 
are algebraic cycles on $(M,J)$ for any of these complex structure $J$ such that 
$(M,J)$ is an algebraic complex manifold. This is an indirect route for constructing algebraic cycles.
If this happens, we shall say that $c_j(E)$ are \emph{rotable algebraic cycles}.

Another instance in which rotations of complex structures have been used is \cite{Sc}.
Schlickewei has used this mechanism to determine Hodge classes in self-products of K3 surfaces
which are rotable algebraic cycles, thereby proving the Hodge conjecture in some cases.

We will describe the bundles which are HYM with respect to a
family of complex structures compatible with a Riemannian structure $(M,g)$
in the different situations of rotations of complex structures that we analyse.

\noindent \textbf{Acknowledgements.} I am grateful to Ivan Smith for a
 kind invitation to Cambridge University to discuss about these matters.
The question about the study of rotations of complex structures for a
product of two K3 surfaces was prompted to the author by Ivan Smith and
Richard Thomas. I would like to thank Misha Verbitsky and Daniel Huybrechts
for useful conversations. Finally, many thanks to the referee for very 
helpful comments.

\section{Rotation of complex structures}

Let $M$ be a Riemannian manifold of real dimension $2n$, and let $H<\SO(2n)$ be its
holonomy group. Consider a second group $G$ such that 
 $$
 H< G<\SO(2n).
 $$
Here $G$ has the role of a ``ground'' group, that is, we fix the $G$-structure of $M$.
So if $G=\SO(2n)$, we are merely fixing the Riemannian structure of $M$.

A compatible complex structure is a reduction (parallel with respect to
the Levi-Civita connection) to a group $U\cong \U(n)$ with $H<U<G$.
This is equivalent to give a K\"ahler structure on $M$. We see this
as follows: fix a base-point $p\in M$ and trivialize $T_pM=\RR^{2n}$.
A tensor $T_p$ on $T_pM$ determines a parallel tensor $T$ on $M$ by 
doing parallel transport along curves, if and only if it is fixed by $H$.
A complex structure on $T_pM$ is detemined by $J_p:T_pM\to T_pM$
with $J_p^2=-\Id$, which is equivalent to giving a subgroup 
$U<\SO(2n)=\SO(T_pM)$, where $U\cong \U(n)$ are the elements which fix $J_p$.
Then $J_p$ determines $J$ with $\nabla J=0$ (that is, an integrable
complex structure) if and only if $H<U$.

We also consider the case of groups $U\cong \SU(n)$ with $H<U<G$ under
the same terminology, although in this case $M$ is endowed with a
K\"ahler structure $I$ \emph{plus} a parallel form $\theta$ of type $(n,0)$ 
with respect to $I$.

The set of compatible complex structures is thus
 $$
 \cU=\{ U \, | \, H<U<G \}.
 $$
Changing a complex structure $U_1\in \cU$ to another one $U_2\in \cU$ will be called 
a \emph{rotation of complex structures}.

We fix $U_0 \in\cU$ and consider 
 $$ 
 N=\{g\in G \, | \, g H g^{-1} =H\}_o
 $$
and 
 $$
 C=\{ g\in N \, | \, g \, U_0 \, g^{-1} =U_0\}_o\, ,
 $$
where the subindex \ $o$ \ means ``connected component of the identity''.
Clearly 
 $$
 H<C<N<G.
 $$ 
Conjugating $U_0$ via $g$ produces another complex
structure $U_g=g\, U_0\,  g^{-1}$. These complex structures are parametrized by 
 $$
 \cU'=N/C.
 $$
Note that $\cU'\subset \cU$. In the situations of this paper, these sets 
are equal.

Now we will analyze different instances of rotations of complex structures.

\section{Hyperk\"ahler rotations} \label{sec:1}

\subsection{K3 surfaces}
K3 surfaces are K\"ahler surfaces with holonomy $H=\SU(2)=\Sp(1)<G=\SO(4)$. In particular
K3 surfaces are hyperk\"ahler. 

The universal cover of $\SO(4)$ is $\widetilde{\SO(4)}=\SU(2)_L\x \SU(2)_R$,
where $\SU(2)_L$ and $\SU(2)_R$ are two copies of $\SU(2)=\Sp(1)$. If we
consider $\RR^4$ as the space of quaternions $\HH$, then $\SU(2)_L$ acts as 
the unit quaternions $\Sp(1)=S^3\subset \HH$ by multiplication on the left, and
$\SU(2)_R$ acts by multiplication on the right.

The holonomy group of a K3 surface $M$ is $H=\SU(2)_L < \SO(4)$. There are
three complex structures $I,J,K$ and $\{L=aI+bJ+cK\, | \, a^2+b^2+c^2=1\}$  is
the family of all compatible complex structures on $M$. This family is a $2$-sphere.
Actually, the quaternions $i,j,k\in \Sp(1)=\SU(2)_R$, acting on the right on
$\HH=\RR^4$, produce the tensors $I,J,K:TM\to TM$, by parallel transport.

Now fix the complex structure $I$. This is the same as to consider the subgroup 
 $$
 U_I=\U(2)<\SO(4)
 $$ 
of all elements of $\SO(4)$ commuting with $I$. These are generated by
$\SU(2)_L$ and $S_I^1=
\{a \Id + \, b \, I\, | \, a^2+b^2=1\}\subset \SU(2)_R$. So $U_I=\SU(2)_L\cdot S_I^1$.
We have then
 \begin{align*}
  N &=\SO(4), \\
 C &=U_I.
 \end{align*}
The rotations of complex structures are given by 
 $$
  \cU'=\SO(4)/\U(2) .  
 $$
Note that $\cU'=\cU$ in this case. Also 
 $$
 \cU'=\SO(4)/(\SU(2)_L \cdot S_I^1) \cong \SU(2)_R/S_I^1 \cong S^2.
 $$ 
The action of $\SU(2)_R$ on
$\cU'$ is by conjugation, and it moves all $L=aI+bJ+cK$ transitively.

Using the metric, we write $\End (\RR^4) \cong (\RR^4)^*\ox (\RR^4)^*$. The
endomorphisms $aI+bJ+cK$, $(a,b,c)\in \RR^3$, correspond to antisymmetric
tensors, which are self-dual with respect to the Hodge $*$-operator, that
is, tensors in $\bigwedge^2_+$. Otherwise said, $\SO(4)$ acts on $\bigwedge^2_+$,
$I$ corresponds to the K\"ahler form $\omega_I$, the isotropy of $\omega_I$
is $U_I=\U(2)$, and $\SO(4)/\U(2)$ is the orbit of $\omega_I$ in $\bigwedge^2_+=\RR^3$.
This is the $2$-sphere $S(\bigwedge^2_+)$, i.e.,
 $$
  \SO(4)/\U(2) \cong S(\bigwedge\nolimits^2_+) = S^2,
 $$
naturally. The action of $\SO(4)/\SU(2)_L =\SU(2)_R/\pm \Id =\SO(3)$ is the standard action on
this $S^2$.

Suppose that $E\to M$ is a complex vector bundle with a connection which is HYM with
respect to $I$.
Then $F_A \in \bigwedge^{1,1}_I(\End E)$ and $\Lambda_I F_A =\lambda \Id$. 
We have a decomposition:
 $$ 
 \bigwedge\nolimits^2=\bigwedge\nolimits^2_+ \oplus \bigwedge\nolimits^2_-=
 \la \omega_I, \omega_J,\omega_K\ra \oplus
 \triangle^{1,1}_{I,prim}
 $$
(Here there is a slight abuse of notation: when refering to forms, $\bigwedge^r$ means the
bundle of $r$-forms on $M$; when dealing with a vector space $\RR^n$, $\bigwedge^r$ is
the $r$-th exterior power of $(\RR^n)^*$. This will happen throughout.)
From this it is clear that $\omega_J,\omega_K$ span the space $\triangle_I^{2,0}=
\Re (\bigwedge^{2,0}_I\oplus \bigwedge^{0,2}_I)$. Here $\triangle^{1,1}_I=\Re(\bigwedge^{1,1}_I)$
and $\triangle^{1,1}_{I,prim}$ is the space of primitive $(1,1)$-forms (those 
orthogonal to $\omega_I$).

There are two options:
\begin{itemize}
 \item If $\lambda=0$, then $F_A\in \bigwedge^{1,1}_{I, prim}(\End E)$, so $F_A\in
 \bigwedge^{1,1}_{L,prim}(\End E)$ for any $L\in \cU$. Then $E$ is HYM with respect
to all $L\in \cU$. Such bundle $E$ is called hyperholomorphic in the
terminology of \cite{Verbitsky}. Note that such bundle is rotable with respect to 
 all complex structures in $\cU=S^2$.
 \item If $\lambda\neq 0$, then $F_A$ is of type $(1,1)$ only with respect to $\pm I$,
and hence $E$ is not rotable.
\end{itemize}

\subsection{Hyperk\"ahler manifolds}
The previous case generalizes to higher dimensions. Let $M$ be a hyperk\"ahler 
manifold of dimension $4n$. This means that $M$ has a Riemannian metric whose
holonomy is $H=\Sp(n)<\SO(4n)$. The group $\Sp(n)$ is the group of endomorphisms
of $\RR^{4n}=\HH^n$ which commute with the quaternionic structure of $\HH^n$ as
an $\HH_R$-vector space (that is, $\HH$ acts on $\HH^n$ by multiplication on the right).

Therefore the elements of $\Sp(1)=S^3\subset \HH_R$, that is the quaternions of the form
$a i +bj+ck$, $a^2+b^2+c^2=1$,  produce endomorphisms $L=aI +bJ +c K$  on the tangent
space $TM$ which commute with the action of $H=\Sp(n)_L$, hence they are parallel with respect
to the Levi-Civita connection. This gives an $S^2$-family of complex structures on $M$
compatible with the Riemannian metric. 

Fix a complex structure $I$, given by some $U_I= \U(2n)$ with $\Sp(n)<U_I<\SO(4n)$.
This subgroup is the isotropy of $I$, 
which is $U_I=\Sp(n)\cdot S^1_I$, where $S^1_I=\{a\Id+\, b\, I\, | \, a^2+
b^2=1\}$. We have
 \begin{align*}
  N & = \Sp(n)\cdot \Sp(1), \\
  C & = U_I= \Sp(n) \cdot S^1_I. 
 \end{align*}
Hence 
 $$
 \cU'=N/C = \frac{\Sp(n)\cdot \Sp(1)}{\Sp(n)\cdot S^1_I} \cong \Sp(1)/ S^1_I \cong S^2.
 $$

The following result gives us the 
decomposition of the space of $2$-forms $\bigwedge^2$ under $\Sp(n)$. 
Consider the quaternionic space $V=\RR^{4n}=\HH^n$, with action of $\HH$ on the right.
The space $W=\bigwedge^2 V$ consists of bilinear antisymmetric maps $\varphi:V\x V\to \RR$.
Let $W_\HH$ be the subset of those
bilinear maps such that $\varphi(x I,y I)=\varphi(x J,y J)=\varphi(x K,y K)=\varphi(x,y)$, for 
all $x,y\in V$; let $W_I$ be the subset of those bilinear maps satisfying 
$\varphi(x I,y I)=-\varphi(x J,y J)=-\varphi(x K,y K)=\varphi(x,y)$, for 
all $x,y\in V$; define $W_J$ and $W_K$ similarly.
Finally note that $\omega_I\in W_I$ produces an (orthogonal)
decomposition $W_I=\la \omega_I\ra \oplus W_{I,prim}$. Then

\begin{lemma}\label{lem:eqn:xxx}
We have the following
  \begin{equation} \label{eqn:xxx} 
 \bigwedge\nolimits^2= \la \omega_I,\omega_J,\omega_K\ra \oplus W_\HH \oplus W_{I,prim} \oplus
W_{J,prim}\oplus W_{K,prim} \, .
 \end{equation}
With respect to the complex structure $I$, 
 \begin{align*}
 &\triangle^{1,1}_{I, prim} =W_\HH \oplus W_{I,prim}, \\
 &\triangle^{2,0}_I =\la \omega_J,\omega_K\ra \oplus W_{J,prim}\oplus W_{K,prim}.
 \end{align*}
and analogously for the other complex structures.
\end{lemma}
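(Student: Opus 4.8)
The plan is to reduce the whole statement to the spectral decomposition of three commuting involutions on $W=\bigwedge^2 V$. For each complex structure I would introduce the pullback operator on $2$-forms,
\[
(P_I\varphi)(x,y)=\varphi(xI,yI),\qquad (P_J\varphi)(x,y)=\varphi(xJ,yJ),\qquad (P_K\varphi)(x,y)=\varphi(xK,yK),
\]
where $xI$ denotes the action of $I$ on $x$, i.e. right multiplication by $i$. Since right multiplication by $i,j,k$ squares to $-\Id$ and $\varphi$ is bilinear, each $P_\bullet$ is an involution, and $P_\bullet$ is self-adjoint for the induced inner product on $2$-forms because $I,J,K$ are orthogonal. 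With these operators the subspaces in the statement are literally the joint eigenspaces: $W_\HH=\{P_I=P_J=P_K=+1\}$, $W_I=\{P_I=+1,\,P_J=P_K=-1\}$, and cyclically for $W_J,W_K$.

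The key structural step is to verify that $P_I,P_J,P_K$ pairwise commute and generate a Klein four-group with $P_IP_J=P_K$, $P_JP_K=P_I$, $P_KP_I=P_J$ (equivalently $P_IP_JP_K=\Id$). This follows directly from the quaternion relations: $(P_IP_J\varphi)(x,y)=\varphi(x(ij),y(ij))=\varphi(xK,yK)$ since $ij=k$, and commutativity holds because $ji=-k$ differs from $ij$ only by a sign, which the bilinearity of $\varphi$ absorbs. The upshot is that on any joint eigenvector the three eigenvalues multiply to $+1$, so only four of the eight sign patterns occur, and these four are exactly $W_\HH,W_I,W_J,W_K$. By the spectral theorem for commuting self-adjoint involutions these four spaces are mutually orthogonal and sum to all of $W$, giving $W=W_\HH\oplus W_I\oplus W_J\oplus W_K$.

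Next I would locate the Kähler forms. Writing $\omega_I(x,y)=g(xI,y)$ and using the invariance of $g$ under right multiplication by unit quaternions, a short computation gives $P_I\omega_I=\omega_I$ while $P_J\omega_I=P_K\omega_I=-\omega_I$; hence $\omega_I\in W_I$, and cyclically $\omega_J\in W_J$, $\omega_K\in W_K$. This is the one place where the precise signs $ji=-k$, $kj=-i$ enter, and tracking these sign conventions of the right quaternion action is the main (if minor) obstacle; everything else is formal. Splitting off the line $\langle\omega_\bullet\rangle$ from each $W_\bullet$ by orthogonality, $W_\bullet=\langle\omega_\bullet\rangle\oplus W_{\bullet,prim}$, and substituting into the four-term decomposition yields precisely (\ref{eqn:xxx}).

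Finally the type decomposition is immediate from the spectral picture, since a real $2$-form lies in $\triangle^{1,1}_I$ exactly when it is $I$-invariant, i.e. $P_I=+1$, and in $\triangle^{2,0}_I$ exactly when $P_I=-1$. Thus $\triangle^{1,1}_I=W_\HH\oplus W_I$ and $\triangle^{2,0}_I=W_J\oplus W_K$. Removing $\omega_I$ from the first (its primitive part being the orthogonal complement of $\omega_I$) gives $\triangle^{1,1}_{I,prim}=W_\HH\oplus W_{I,prim}$, while the second reads $\triangle^{2,0}_I=\langle\omega_J,\omega_K\rangle\oplus W_{J,prim}\oplus W_{K,prim}$, as claimed; the analogous statements for $J$ and $K$ follow by the cyclic symmetry of the construction.
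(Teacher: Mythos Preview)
Your argument is correct and in fact more complete than the paper's own proof. The paper establishes $W=W_\HH\oplus W_I\oplus W_J\oplus W_K$ by a bare-hands dimension count: it writes down explicit quaternionic matrices $A\in M_{n\times n}(\HH)$ and checks that the real/imaginary parts of $\psi_A(x,y)=x^T A\,\bar y$ land in the various pieces, giving lower bounds on the dimensions that add up to $\dim W=8n^2-2n$; the type decomposition for $\triangle^{1,1}_{I,prim}$ and $\triangle^{2,0}_I$ is then asserted without further comment.

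Your route is genuinely different and more structural: you recognise $W_\HH,W_I,W_J,W_K$ as the simultaneous eigenspaces of the three pullback involutions $P_I,P_J,P_K$, observe the Klein-four relation $P_IP_JP_K=\Id$ coming from $ij=k$ (with the sign absorbed by bilinearity), and invoke the spectral decomposition of commuting involutions. This buys you orthogonality of the summands for free and makes the identification $\triangle^{1,1}_I=\{P_I=+1\}$, $\triangle^{2,0}_I=\{P_I=-1\}$ a one-line consequence, so the second half of the lemma is actually proved rather than stated. What you lose relative to the paper is the explicit dimensions $\dim W_\HH=2n^2+n$ and $\dim W_I=\dim W_J=\dim W_K=2n^2-n$, which drop out of the matrix construction but are not needed for the lemma as stated.
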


\begin{proof}
We have to see that $W=W_\HH \oplus W_I \oplus W_J\oplus W_K$.
First, note that $W=\bigwedge^2 V$ has dimension $\dim W=8n^2-2n$. 
Secondly, note that $W_\HH, W_I, W_J, W_K$ are complementary subspaces,
so their sum is a direct sum.

We introduce the following notation: for a quaternion $q=a+bi+cj+dk\in \HH$, let
$a=\Re(q)$, $b=\text{Im}(q)$, $c=\text{Jm}(q)$, $d=\text{Km}(q)$.
Take $A\in M_{n\x n}(\HH)$, and $\psi_A(x,y)=x^T A \, \overline{y}$.
Then for $A$ a real antisymmetric matrix, $\Re(\psi_A)\in W_\HH$, and for $A$ real symmetric, 
$\text{Im}(\psi_A)$, $\text{Jm}(\psi_A)$, $\text{Km}(\psi_A)\in W_\HH$. This implies
that $\dim W_\HH\geq 2n^2+n$.

On the other hand, for $A$ real antisymmetric, $\text{Im}(\psi_{Ai})\in W_I$, and
for $A$ real symmetric,  $\Re(\psi_{Ai})$, $\text{Jm}(\psi_{Ai})$, $\text{Km}(\psi_{Ai})\in W_I$.
Hence $\dim W_I\geq 2n^2-n$. 

Analogously, $\dim W_J\geq 2n^2-n$ and $\dim W_K\geq 2n^2-n$. So 
$W_\HH \oplus W_I \oplus W_J\oplus W_K$ has dimension at least $2n^2+n+ 3(2n^2-n)=8n^2-2n=
\dim W$. This proves that 
$W=W_\HH \oplus W_I \oplus W_J\oplus W_K$, and $\dim W_\HH=2n^2+n$,
$\dim W_I=\dim W_J=\dim W_K=2n^2-n$.
\end{proof}

Note that a hyperk\"ahler manifold
$(M,I)$ is holomorphically symplectic with symplectic form 
$\Omega_I=\omega_J+ \sqrt{-1} \omega_K\in 
\bigwedge^{2,0}_I$, and $\omega_J,\omega_K\in \triangle_I^{2,0}$.

The action of $\Sp(1)$ on the set of complex structures of 
$V$ acts on the decomposition (\ref{eqn:xxx}) by rotating the first space
and the last three summands. In particular,
 $$
 \cU' \cong S(\la \omega_I,\omega_J,\omega_K\ra)=S^2.
 $$

The main consequence of Lemma \ref{lem:eqn:xxx} is that 
 \begin{equation} \label{eqn:L}
  \triangle^{1,1}_{L}\cap \triangle^{1,1}_{L'}=W_\HH,
 \end{equation}
if $L,L'\in \cU'$ and $L'\neq \pm L$.

If $E\to M$ is a complex vector bundle with a connection $A$ which is HYM with respect to
$I$, then $F_A \in \bigwedge^{1,1}_I(\End E)$ and $\Lambda_I F_A =\lambda \Id$. 
By (\ref{eqn:L}), the conection $A$ is HYM with respect to some $L\neq \pm I$ if and only if 
 $$
  F_A \in W_\HH (\End E).
  $$ 
In the terminology of \cite{Verbitsky}, such bundles are called hyperholomorphic. We have 
thus the following definition. 

\begin{defn}\label{def:hyperh}
Let $E\to M$ be a complex vector bundle, and let $A$ be a connection which is HYM with respect to
 $I$. We say that $A$ is \emph{hyperholomorphic} if $F_A \in W_\HH (\End E)$.
 \end{defn}

Therefore, $E$ is a rotable bundle if and only if it is hyperholomorphic. In this case
$E$ is HYM with respect to all $L\in \cU'$. Note that,
in particular, it should be $\lambda =0$. 

We have a cohomological characterization of hyperholomorphic bundles as follows.

\begin{prop}[{\cite[Theorem 3.1]{Ve2}}]
Let $M$ be a compact hyperk\"ahler manifold.
and $E$ is a vector bundle HYM with respect to $I$. Then $E$ is
hyperholomorphic if and only if $c_1 (E), c_2 (E)$ are 
Hodge classes with respect to $J$ and $K$.
\end{prop}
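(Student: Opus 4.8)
The plan is to prove the two implications separately: the forward implication is immediate from the pointwise type of the curvature, while the reverse one is the substance and rests on a positivity computation that I expect to be the hard step.

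\emph{Forward direction.} Suppose $E$ is hyperholomorphic, so $F_A\in W_\HH(\End E)$. By Lemma~\ref{lem:eqn:xxx} and its analogues for the other complex structures, $W_\HH\subset\triangle^{1,1}_L$ for every $L\in\cU'$; in particular $F_A$ is of type $(1,1)$ with respect to $J$ and $K$. Hence the Chern--Weil representatives $\tr F_A$ and $\tr(F_A\wedge F_A)$ are forms of pure type $(1,1)$ and $(2,2)$ with respect to $J$ and $K$, so $c_1(E)$ and $c_2(E)$ are Hodge classes with respect to $J$ and $K$.

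\emph{Reverse direction; reduction via $c_1$.} Since $A$ is HYM with respect to $I$, Lemma~\ref{lem:eqn:xxx} lets me write $F_A=\tfrac{\lambda}{2n}\,\omega_I\,\Id+G+B$ with $G\in W_\HH(\End E)$ and $B\in W_{I,prim}(\End E)$ (the $\triangle^{2,0}_I$-summands are absent because $F_A$ is of type $(1,1)_I$). I would first note that $\tr F_A$ is harmonic: it is closed by Chern--Weil, it is of type $(1,1)_I$, and $\Lambda_I\tr F_A=\lambda\,\rk E$ is constant, so its primitive part is a closed primitive $(1,1)$-form (hence co-closed) while the $\omega_I$-part is parallel. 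As the hyperk\"ahler metric is common to all complex structures, $\tr F_A$ is \emph{the} harmonic representative of (a multiple of) $c_1(E)$, so the $J$-Hodge type of $c_1$ equals the pointwise $J$-type of $\tr F_A$. By the analogue of Lemma~\ref{lem:eqn:xxx} for $J$, the $\triangle^{2,0}_J$-summands are $\langle\omega_I,\omega_K\rangle\oplus W_{I,prim}\oplus W_{K,prim}$, and the only components of $\tr F_A$ landing there are the $\omega_I$-term and $\tr B\in W_{I,prim}$. Thus $c_1$ being Hodge with respect to $J$ forces $\lambda=0$ and $\tr B=0$, leaving $F_A=G+B$ with $G\in W_\HH(\End E)$ and $B\in W_{I,prim}(\End E)$ trace-free.

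\emph{Reverse direction; the $c_2$ argument.} It remains to show $B=0$. With $\lambda=0$ and $\tr B=0$, the class $[\tr(F_A\wedge F_A)]$ is, up to a positive constant, $2c_2(E)-c_1(E)^2$, which is of type $(2,2)$ with respect to $J$ and $K$ (by hypothesis for $c_2$, and because $c_1$ is now of type $(1,1)$). I would pair it against the parallel real forms $\Phi_J=\Omega_J^{n-2}\wedge\overline{\Omega_J}^{n}+\text{c.c.}$ and $\Phi_K=\Omega_K^{n-2}\wedge\overline{\Omega_K}^{n}+\text{c.c.}$, which are of pure type $(2n-4,2n)\oplus(2n,2n-4)$ with respect to $J$, respectively $K$. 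Since cup product respects Hodge type and $H^{4n}=H^{2n,2n}$, the products $[\tr(F_A\wedge F_A)]\cup[\Phi_J]$ and $[\tr(F_A\wedge F_A)]\cup[\Phi_K]$ vanish, so $\int_M\tr(F_A\wedge F_A)\wedge\Phi_J=\int_M\tr(F_A\wedge F_A)\wedge\Phi_K=0$. Writing $B=B^{2,0}_J+B^{0,2}_J$ with respect to $J$ (legitimate as $W_{I,prim}\subset\triangle^{2,0}_J$), only the $(4,0)_J+(0,4)_J$ part of $\tr(F_A\wedge F_A)$ survives the wedge with $\Phi_J$, and that part equals $\tr(B^{2,0}_J\wedge B^{2,0}_J)$ plus its conjugate (the terms $\tr(G\wedge G)$ and $\tr(G\wedge B)$ are of type $(2,2)_J$ and $(3,1)_J+(1,3)_J$). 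The same holds with $K$.

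\emph{The positivity (main obstacle) and conclusion.} The crux is the pointwise hyperk\"ahler linear-algebra identity
\begin{multline*}
\tr(B^{2,0}_J\wedge B^{2,0}_J)\wedge\Omega_J^{n-2}\wedge\overline{\Omega_J}^{n}
+\tr(B^{2,0}_K\wedge B^{2,0}_K)\wedge\Omega_K^{n-2}\wedge\overline{\Omega_K}^{n}+\text{c.c.}\\
=-c\,|B|^2\,\vol ,
\end{multline*}
valid for every trace-free skew-Hermitian $B\in W_{I,prim}(\End E)$ with a universal constant $c>0$. I expect this to be the genuinely hard point: each of the $J$- and $K$-integrands is a Pfaffian-type quadratic form in $B$ that is \emph{not} sign-definite on its own (a direct check in complex dimension four already exhibits indefiniteness), and one must verify that the $\Sp(1)$-action rotating $J$ into $K$ makes the sum definite. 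Granting the identity, integrating over $M$ gives $-c\int_M|B|^2\,\vol=\int_M\tr(F_A\wedge F_A)\wedge(\Phi_J+\Phi_K)=0$ by the previous paragraph, hence $B=0$. Therefore $F_A=G\in W_\HH(\End E)$ and $E$ is hyperholomorphic.
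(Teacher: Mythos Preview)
The paper does not give its own proof of this proposition; it simply cites \cite[Theorem 3.1]{Ve2} and moves on. So there is no in-paper argument to compare against.

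As for your proposal: the forward direction and the $c_1$-reduction are correct and cleanly done. The $c_2$ step, however, has a real gap precisely where you flag it. You have correctly isolated the obstruction $B\in W_{I,prim}(\End E)$ and chosen sensible test forms $\Phi_J,\Phi_K$, but the claimed pointwise identity
\[
\tr(B^{2,0}_J\wedge B^{2,0}_J)\wedge\Omega_J^{\,n-2}\wedge\overline{\Omega_J}^{\,n}
+\tr(B^{2,0}_K\wedge B^{2,0}_K)\wedge\Omega_K^{\,n-2}\wedge\overline{\Omega_K}^{\,n}+\text{c.c.}
=-c\,|B|^2\,\vol
\]
is not established, and I do not see that it follows from anything in the paper; your own remark that each summand is indefinite already shows this is not a routine check. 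Verbitsky's actual argument in \cite{Ve2} does not proceed via such an identity: he first shows that a class which is of type $(p,p)$ for $I$, $J$, $K$ is $\SU(2)$-invariant for the natural $\SU(2)$-action on $H^*(M)$, and then runs a calibration/Bogomolov-type inequality much closer in spirit to Theorem~\ref{thm:thm1} here (comparing $c_2(E)\cup[\omega_L]^{2n-2}$ across $L\in S^2$) to force $F_A^{2,0}=0$ for all $L$. Your route would require proving your positivity identity from scratch, which is at least as much work as Verbitsky's original argument and is not obviously true in the exact form you state; as written, the proof is incomplete at the decisive step.
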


Recall that a Hodge class with respect to some complex structure $L$ is a class
in $H^{p,p}_L(M)$, $p\geq 0$.

We have an alternative characterization of hyperholomorphic bundles in terms
of calibrations of the Chern classes.

\begin{thm} \label{thm:thm1}
 Let $M$ be a compact hyperk\"ahler manifold, and let $E$ be a vector bundle
 HYM with respect to $I$ with $\deg_I (E)=0$. Then 
  $$
   c_2(E)\cup [\omega_L]^{n-2} \leq c_2(E)\cup [\omega_I]^{n-2}
  $$
 for $L\in \cU'$ and $E$ is HYM with respect to $L$ if and only if there is equality.
\end{thm}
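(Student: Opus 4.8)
The plan is to evaluate both sides by Chern--Weil theory for a HYM connection $A$ and to isolate the dependence on $L$ in a single pointwise calibration identity controlled by Lemma~\ref{lem:eqn:xxx}. Writing $m=2n$ for the complex dimension of $M$, the quantities to compare are the top-degree numbers $\int_M c_2(E)\wedge\omega_L^{m-2}$. Since $\deg_I(E)=0$, the HYM constant vanishes, so $\Lambda_I F_A=0$ and $F_A$ is a primitive $(1,1)_I$-form valued in the skew-Hermitian endomorphisms of $E$. By Lemma~\ref{lem:eqn:xxx} I write $F_A=F_\HH+F_I$ with $F_\HH\in W_\HH(\End E)$ and $F_I\in W_{I,prim}(\End E)$; by (\ref{eqn:L}) the bundle $E$ is HYM with respect to $L\neq\pm I$ precisely when $F_I=0$.

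First I would remove the $c_1$-contribution. Using $c_2=\tfrac12 c_1^2-\mathrm{ch}_2$ with $\mathrm{ch}_2(E)=-\frac{1}{8\pi^2}\tr(F_A\wedge F_A)$, it suffices to treat the two summands separately. The class $[c_1(E)]$ is of type $(1,1)_I$ and, because $\deg_I(E)=0$, it is primitive; by Verbitsky's description of the $\Sp(1)$-action on $H^2$ (whose only weight-$2$ harmonic $2$-forms are the span $\la\omega_I,\omega_J,\omega_K\ra$), a primitive $(1,1)_I$-class is $\Sp(1)$-invariant. Hence $[c_1(E)]$ is primitive of type $(1,1)_L$ for every $L$, and the Hodge--Riemann relation makes $c_1(E)^2\cup[\omega_L]^{m-2}$ independent of $L$. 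Therefore all the $L$-dependence of $c_2(E)\cup[\omega_L]^{m-2}$ is carried by the term $-\mathrm{ch}_2$.

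The heart of the argument is the pointwise identity, valid because each of $F_\HH$ and $F_I$ has vanishing $\omega_L$-component, so $\Lambda_L F_A=0$ for every $L$,
$$\tr(F_A\wedge F_A)\wedge\frac{\omega_L^{m-2}}{(m-2)!}=\Big(|(F_A)^{1,1}_{L,prim}|^2-|(F_A)^{2,0+0,2}_{L}|^2\Big)\vol,$$
the sign pattern being that of the scalar Hodge--Riemann relations reversed by the negative-definiteness of the trace form on skew-Hermitian endomorphisms. Now $F_\HH$ is primitive $(1,1)_L$ for all $L$, so it contributes the same amount for every $L$; the remaining task, and the main obstacle, is to compute the $L$-type of $F_I\in W_{I,prim}(\End E)$. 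Setting $a=\la I,L\ra$ and using the involution $\rho_L\phi(x,y)=\phi(Lx,Ly)$ (whose $\pm1$-eigenspaces are the $(1,1)_L$- and $(2,0)+(0,2)_L$-parts), I would show for $\phi\in W_{I,prim}$ that $|\phi^{1,1}_{L,prim}|^2=a^2|\phi|^2$ and $|\phi^{2,0+0,2}_{L}|^2=(1-a^2)|\phi|^2$. Expanding $\rho_{aI+bJ+cK}$ produces $(2a^2-1)\phi$ plus cross terms $\phi(Ix,Jy)+\phi(Jx,Iy)$, and the point is that these are orthogonal to $\phi$: writing $\Phi$ for the antisymmetric matrix of $\phi$, the defining relations of $W_{I,prim}$ say that $\Phi$ commutes with $I$ and anticommutes with $J$ and $K$, so each cross term reduces to a trace of the symmetric matrix $\Phi^2$ against one of the antisymmetric matrices $I,J,K$, which vanishes.

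Assembling the pieces, the contributions of $F_\HH$ and of the $c_1^2$-term cancel in the difference, leaving
$$c_2(E)\cup[\omega_I]^{m-2}-c_2(E)\cup[\omega_L]^{m-2}=\frac{(m-2)!\,(1-a^2)}{4\pi^2}\int_M|F_I|^2\,\vol\ \geq\ 0,$$
since $a^2<1$ for $L\neq\pm I$. This gives the inequality, and equality holds if and only if $\int_M|F_I|^2\,\vol=0$, i.e. $F_I\equiv0$, which by (\ref{eqn:L}) is exactly the condition that $E$ be HYM with respect to $L$.
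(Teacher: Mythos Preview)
Your proof is correct and complete: the decomposition $F_A=F_\HH+F_I$, the verification that $\Lambda_L F_A=0$ for all $L$, the handling of the $c_1^2$ term via $\Sp(1)$-invariance of primitive $(1,1)_I$-classes, and the computation $|\phi^{2,0+0,2}_L|^2=(1-a^2)|\phi|^2$ for $\phi\in W_{I,prim}$ all check out (the orthogonality of the cross terms to $\phi$ reduces, as you say, to $\Tr(K\Phi^2)=0$ with $K$ antisymmetric and $\Phi^2$ symmetric).

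The paper takes a shorter, less explicit route. It simply records the Hodge--Riemann identity
\[
c_2(E)\cup\frac{[\omega_L]^{m-2}}{(m-2)!}=\frac{1}{8\pi^2}\Big(\|F_A\|^2-3\|F_A^{2,0+0,2}\|_L^2-m\|\Lambda_L F_A\|^2\Big)
\]
for the \emph{fixed} connection $A$ and the \emph{variable} complex structure $L$, observes that $\|F_A\|^2$ is independent of $L$ while the two subtracted terms are nonnegative and vanish at $L=I$ (because $A$ is HYM with $\deg_I(E)=0$), and concludes immediately. No splitting of $F_A$ into $W_\HH$- and $W_{I,prim}$-parts, no analysis of how $W_{I,prim}$ sits with respect to the type decomposition at $L$, and no separate discussion of the $c_1^2$ contribution is needed (the paper is in fact slightly cavalier in identifying $\frac{1}{8\pi^2}\Tr(F_A\wedge F_A)$ with $c_2$ rather than $c_2+\tfrac12 c_1^2$; your treatment is more careful on this point, and the discrepancy is harmless precisely because of your $c_1$ argument). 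What your longer computation buys is the explicit formula
\[
c_2(E)\cup[\omega_I]^{m-2}-c_2(E)\cup[\omega_L]^{m-2}=\frac{(m-2)!\,(1-\langle I,L\rangle^2)}{4\pi^2}\int_M|F_I|^2\,\vol,
\]
which quantifies the defect and makes transparent that it is governed solely by the $W_{I,prim}$-component of the curvature.
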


\begin{proof}
We have the following
 $$  
 \alpha \wedge \alpha \wedge \frac{1}{(n-2)!} \omega_I^{n-2} = 
  \left\{ \begin{array}{ll} 
  || \alpha||^2 \vol, \qquad & \alpha\in \triangle^{2,0}_I \\
  -|| \alpha||^2 \vol, \qquad & \alpha\in \triangle^{1,1}_{I,prim} \\
  (n-1) || \alpha||^2 \vol, \qquad & \alpha\in \la \omega_I \ra 
  \end{array} \right.
  $$
 Therefore
  \begin{align*}
  c_2(E)\cup \frac{1}{(n-2)!} [\omega_I]^{n-2} & = \frac{1}{8\pi^2} \int_M
  \Tr(F_A \wedge F_A) \wedge \frac{1}{(n-2)!} \omega_I^{n-2} \\
  &= \frac{1}{8\pi^2} (||F_A^{1,1,prim}||^2 -
   ||F_A^{2,0}||^2 - (n-1) || \Lambda_I F_A||^2 ) \\
   &= \frac{1}{8\pi^2} (||F_A||^2 - 
   3 ||F_A^{2,0}||^2 - n || \Lambda_I F_A||^2 ) ,
  \end{align*}
 using that $\la B,C\ra= -\Tr(B C)$ is the Killing metric in ${\mathfrak u}(r)$.

 Therefore  $c_2(E)\cup [\omega_L]^{n-2}$, $L\in \cU$, achieves its maximum
 if $F_A^{2,0}=0$ (w.r.t.\ $L$) and $\Lambda_L F_A=0$. In this case $A$ is HYM with respect to $L$.
 \end{proof}

Theorem \ref{thm:thm1} also appears as Claim 3.21 in \cite{Verbitsky} with a different proof.

\section{Complex tori}

\subsection{$\Spin$-rotation of complex $4$-tori}
Let $M=\RR^8/\Lambda$ be a real $8$-torus, where $\RR^8$ is endowed with the
standard Riemannian (flat) metric. Then the holonomy is trivial,
$H=\{1\} <\SO(8)$. We give $M$ the $\Spin(7)$-structure given by 
the standard $4$-form 
 \begin{align*}
 \Omega =& \,  dx_{1234}+ dx_{1256}+ dx_{1278} +dx_{1357} -dx_{1368}
   -dx_{1458}-dx_{1467} \\ &  -dx_{2358} -dx_{2367} -dx_{2457} +
   dx_{2468} + dx_{3456} +dx_{3478} +dx_{5678}
 \end{align*}
By definition, $G=\Spin(7)<\SO(8)$ is the isotropy subgroup of $\Omega$.

We consider the $\SU(4)$-structures compatible with the $\Spin(7)$-structure,
that is $U\cong \SU(4)$ with $U<G$. An $\SU(4)$-structure on $M$ is given
by a complex structure $I$, compatible with the metric, and a $(4,0)$-form
$\theta \in \bigwedge^{4,0}$ with $|\theta|=4$. The K\"ahler form is $\omega_I$.
The $\Spin(7)$-structure determined by $U$ is given by the $4$-form
$\Omega_U=\frac12 \omega_I^2 + \Re(\theta)$. We say that $U$ is compatible
with the given $\Spin(7)$-structure if $\Omega_U=\Omega$, or equivalently, if $U<G$.
The space $\cU$ is the space of all such $U$.

Fix an $\SU(4)$-structure $U_0=\SU(4)<\Spin(7)$ associated to $(I,\theta)$. Then 
 \begin{align*}
 N &=\Spin(7), \\
 C &=U_0=\SU(4).
\end{align*}
So the complex structures are parametrized by
 $$
 \cU'=N/C=\Spin(7)/\SU(4)\, . 
 $$
This space is a $6$-sphere. It is described in \cite[Lemma 1]{Mu} as follows.
The group $\Spin(7)$ acts on the $2$-forms, and the decomposition
in irreducible summands is $\bigwedge^2=\bigwedge^2_7\oplus \bigwedge^2_{21}$,
where $\bigwedge^2_7$ is a $7$-dimensional representation
and it consists of those $\alpha\in \bigwedge^2$ with $\Omega\wedge \alpha=3*\alpha$,
and $\bigwedge^2_{21}$ is a $21$-dimensional representation
and it consists of those $\alpha\in \bigwedge^2$ with $\Omega\wedge \alpha=-*\alpha$.
It is easy to see that $\omega_I\in \bigwedge^2_7$. Then the action of $\Spin(7)$
by conjugation on $U_0$ moves $\omega_I$ in $\bigwedge^2_7$ transitively in
the sphere $S(\bigwedge^2_7)$ of elements of norm $2$. That is,
 $$
 \Spin(7)/\SU(4) \cong S(\bigwedge\nolimits^2_7)=S^6 \, .
 $$

There is a map $\cL:\bigwedge^{2,0}\to\bigwedge^{0,2}$ given by
 $$
 \bigwedge\nolimits^{2,0}_I\cong (\bigwedge\nolimits^{2,0}_I)^* \cong
 (\overline{\bigwedge\nolimits^{0,2}_I})^* \cong \bigwedge\nolimits^{0,2}_I\, ,
 $$
where the first map is the duality given by $\theta$, the second map is
conjugation, and the third map is given by the hermitian metric. This
$\cL$ produces another map $\cL:\bigwedge^{0,2}_I\to\bigwedge^{2,0}_I$, and
by considering the real subspaces, a map $\cL:\triangle^{2,0}_I\to \triangle^{2,0}_I$.
It is easy to see that $\cL^2=\Id$, so there is a decomposition 
$\triangle^{2,0}_I=\triangle^{2,0}_{I,+}\oplus \triangle^{2,0}_{I,-}$ into two $6$-dimensional
subspaces, according to the eigenvalues of $\cL$. Then 
 \begin{align*}
 \bigwedge\nolimits^2_7 &=\triangle^{2,0}_{I,+}\oplus \langle \omega_I\rangle \\
 \bigwedge\nolimits^2_{21} &=\triangle^{2,0}_{I,-}\oplus \triangle^{1,1}_{I,prim} 
 \end{align*}
as it is computed in \cite[Proposition 2]{Mu} (see also \cite{DoTh}). The conclusion is
that given any $\gamma\in\triangle^{2,0}_{I,+}$, the form
 $$
 \omega= 2 \, \frac{\omega_I+\gamma}{|\omega_I+\gamma|}
 $$
defines another $\SU(4)$-structure in $\cU'$.

Let $E\to M$ be a hermitian complex vector bundle. Let $A$ be a hermitian 
connection which is HYM with respect to $I$.
Then $F_A \in \bigwedge^{1,1}_I(\End E)$ and $\Lambda_I F_A =\lambda \Id$. 
We decompose $F_A=F_A^o + \frac1r (\Tr F_A) \Id$, where $F_A^o$ is the trace-free
part. We have that $c_1(E)=[\frac{\imat}{2\pi} \Tr F_A]$ and 
 $$
  \beta(E)=c_2(E)- \frac{r-1}{2r} c_1(E)^2= \left[ \frac{1}{8\pi^2} \Tr(F_A^o \wedge F_A^o) \right] .
  $$

\begin{defn}
$A$ is a \textit{spinstanton} (a $\Spin(7)$-instanton
in the terminology of \cite{Lewis} or \cite{Mu}) if 
$F_A^o \in \bigwedge\nolimits^2_{21}(\End E)$.
\end{defn}

There is a cohomological criterium for Spin-rotation as follows

\begin{prop}
 Let $E\to M$ be a hermitian complex vector bundle. Let $A$ be a 
connection which is HYM with respect to $I$. Then $A$ is HYM with respect to
$L\in \cU'$ if and only if $c_1(E), c_2(E)$ are Hodge classes with respect to $L$.
\end{prop}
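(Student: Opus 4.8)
The plan is to prove the two implications separately; the forward one is formal and the converse carries all the content, the decisive point being a positivity that is forced by the $\Spin(7)$-structure.

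\smallskip
\noindent\emph{Forward direction.} If $A$ is HYM with respect to $L$, then $F_A\in\bigwedge\nolimits^{1,1}_L(\End E)$ pointwise. Hence $\Tr F_A$ is a closed $(1,1)_L$-form, so $c_1(E)\in H^{1,1}_L$; and, writing $F_A=F_A^o+\frac1r(\Tr F_A)\Id$, both $F_A^o$ and $\Tr F_A$ are then $(1,1)_L$, so $\Tr(F_A^o\wedge F_A^o)$ is a closed $(2,2)_L$-form and $\beta(E)\in H^{2,2}_L$. Since $c_1(E)^2\in H^{2,2}_L$ as well, $c_2(E)=\beta(E)+\frac{r-1}{2r}c_1(E)^2$ is a Hodge class with respect to $L$. (That a closed form of pure type $(p,q)_L$ has a Hodge class uses that on the compact K\"ahler manifold $M$ the Laplacian preserves bidegree.)

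\smallskip
\noindent\emph{Converse, trace part.} I would first dispose of $\Tr F_A$. On any compact K\"ahler manifold the curvature of the induced HYM connection on $\det E$, namely $\Tr F_A$, is harmonic: it is closed, of type $(1,1)_I$, and $\Lambda_I\Tr F_A$ is constant, so by the K\"ahler identity $[\Lambda,\bbd]=-\imat\,\partial^*$ and its conjugate one gets $d^*\Tr F_A=0$. Since $M$ is a flat torus, a harmonic form is parallel, so $\Tr F_A$ is exactly the constant form representing $c_1(E)$. Thus ``$c_1(E)$ Hodge with respect to $L$'' means literally $\Tr F_A\in\triangle^{1,1}_L$, which also makes $\Lambda_L\Tr F_A$ constant; the trace part of the HYM condition for $L$ is therefore automatic.

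\smallskip
\noindent\emph{Converse, trace-free part.} Because $A$ is HYM with respect to $I$, the trace-free curvature satisfies $F_A^o\in\triangle^{1,1}_{I,prim}(\End E)\subset\bigwedge\nolimits^2_{21}(\End E)$, and $\bigwedge\nolimits^2_{21}$ is $\Spin(7)$-invariant, hence the same for every point of $\cU'$. For any $L\in\cU'$ we have $\bigwedge\nolimits^2_{21}=\triangle^{2,0}_{L,-}\oplus\triangle^{1,1}_{L,prim}$, and since $\langle\omega_L\rangle$ and $\triangle^{2,0}_{L,+}$ both lie in $\bigwedge\nolimits^2_7$, the sole obstruction to $F_A^o\in\triangle^{1,1}_{L,prim}(\End E)$ (i.e.\ to $A$ being HYM with respect to $L$) is the component $\sigma$ of $F_A^o$ in $\triangle^{2,0}_{L,-}(\End E)$; write $\sigma=\mu+\mu^*$ with $\mu=(F_A^o)^{2,0}_L$. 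Now $\Tr(F_A^o\wedge F_A^o)$ represents $8\pi^2\beta(E)$, and its $(4,0)_L$-part is $\Tr(\mu\wedge\mu)$. Pairing with the parallel (hence closed) $(4,0)_L$-form $\theta_L$ of the $\SU(4)$-structure, the exact part drops out, so $\int_M\Tr(\mu\wedge\mu)\wedge\bar\theta_L=8\pi^2\int_M\beta(E)\wedge\bar\theta_L$ depends only on the class $\beta(E)$. When $c_1(E),c_2(E)$ are Hodge with respect to $L$ we have $\beta(E)\in H^{2,2}_L$, whose pairing with $[\bar\theta_L]\in H^{0,4}_L$ vanishes, so $\int_M\Tr(\mu\wedge\mu)\wedge\bar\theta_L=0$. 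The key computation is that this integral is a positive multiple of $\norm{\mu}_{L^2}^2$: the defining eigenvalue condition $\sigma\in\triangle^{2,0}_{L,-}$ is equivalent (by the construction of $\cL$ out of $\theta_L$-duality, conjugation and the metric) to $\cL\mu=-\mu^*$, and this relation turns the $\theta_L$-pairing $\Tr(\mu\wedge\mu)\wedge\bar\theta_L$ into the Hermitian pairing $\Tr(\mu\wedge\mu^*)\wedge\frac12\omega_L^2$, which by the pointwise identity of Theorem \ref{thm:thm1} (and the Killing form $\langle B,C\rangle=-\Tr(BC)$) equals $-c\,\norm{\mu}^2\vol$ with $c>0$. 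Hence $\mu=0$, so $F_A^o\in\triangle^{1,1}_{L,prim}(\End E)$, and together with the trace part this gives $F_A\in\bigwedge\nolimits^{1,1}_L(\End E)$ with $\Lambda_L F_A=\lambda'\Id$.

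\smallskip
\noindent\emph{Main obstacle.} The hard step is precisely the definiteness in the crux above: a priori $\int_M\Tr(\mu\wedge\mu)\wedge\bar\theta_L$ is the $\CC$-bilinear ``holomorphic'' pairing, which is sign-indefinite, and mere Hodge-ness would not force $\mu=0$. What rescues the argument is the constraint $\mu\in\triangle^{2,0}_{L,-}$ coming from $F_A^o\in\bigwedge\nolimits^2_{21}$, i.e.\ the $(-1)$-eigenspace condition for $\cL$, which identifies this pairing with the positive-definite Hermitian norm. This is exactly where the $\Spin(7)$-structure — the splitting $\bigwedge\nolimits^2_{21}=\triangle^{2,0}_{L,-}\oplus\triangle^{1,1}_{L,prim}$ and the use of $\theta_L$ — is indispensable, so I expect the careful verification of $\cL\mu=-\mu^*\Rightarrow\int\Tr(\mu\wedge\mu)\wedge\bar\theta_L=\pm c'\norm{\mu}_{L^2}^2$ to be the only genuinely delicate computation.
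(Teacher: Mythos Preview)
Your argument is correct, and in fact carries more content than the paper's own proof. The paper disposes of the trace-free part in one line by invoking \cite[Proposition~11]{Mu}: since $A$ is HYM with respect to $I$, one has $F_A^o\in\bigwedge\nolimits^{1,1}_{I,prim}(\End E)\subset\bigwedge\nolimits^2_{21}(\End E)$, hence $A$ is a spinstanton, and the cited result says a spinstanton is traceless HYM with respect to $L$ if and only if $\beta(E)\in H^{2,2}_L(M)$. Your positivity computation via pairing with $\bar\theta_L$ and the eigenvalue constraint on $\cL$ is precisely the mechanism behind that cited proposition, so you have essentially reproved it from scratch; the identification of this definiteness as the crux is exactly right. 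For the trace part, the paper takes the more pedestrian route of writing $\Tr F_A=\beta+da$ with $\beta$ of type $(1,1)_L$ and modifying $A$ to $A+a\,\Id$; your observation that on a flat torus the HYM condition already forces $\Tr F_A$ to be harmonic, hence parallel, is sharper and avoids the (slightly awkward) change of connection, which strictly speaking alters the $A$ about which the statement is made. Two small cosmetic points: since $F_A^o$ is $\lu(r)$-valued one has $\sigma=\mu-\mu^*$ rather than $\mu+\mu^*$, and correspondingly the eigenvalue condition reads $\cL\mu=\mu^*$ (equivalently $\cL\mu=-\bar\mu$ with bar denoting conjugation on the form factor only); these sign conventions do not affect the definiteness conclusion.
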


\begin{proof}
As $A$ is HYM with respect to $I$, we have that $F_A^o \in \bigwedge^{1,1}_I(\End E)$.
In particular, $F_A^o \in \bigwedge\nolimits^2_{21}(\End E)$ and $A$ is a spinstanton.
By \cite[Proposition 11]{Mu}, a spinstanton $A$ is traceless HYM with respect to $L$ (that is  
$F_A^o \in \bigwedge^{1,1}_L(\End E)$) if and only if $\beta(E)\in H^{2,2}_L(M)$. 

If $c_1(E)\in H^{1,1}_L(M)$, then $[\Tr F_A]$ is of type $(1,1)$, so $\Tr F_A= \beta+da$,
for some $\beta\in \Omega^{1,1}(M)$, and a $1$-form $a$. Changing the connection $A$ to 
$A+a \, \Id$, we have that $\Tr F_A \in \bigwedge^{1,1}$, and hence $F_A\in \bigwedge^{1,1}(\End E)$.
So $A$ is HYM with respect to $L$.
\end{proof}

There is also a characterization of Spin-rotability in terms of calibrations,
which is the analogue of Theorem \ref{thm:thm1} in this situation.

\begin{thm} \label{thm:Spin-rot}
  Let $(M,I)$ be a complex $4$-torus which is algebraic, and let
  $E\to M$ be a vector bundle which is HYM with respect to $I$. 
  Assume that $c_1(E)=0$. Then 
 $$
 c_2(E) \cup [\omega_L]^2 \leq c_2(E) \cup [\omega_I]^2
 $$
with equality if and only if $E$ is HYM with respect to $L$.
\end{thm}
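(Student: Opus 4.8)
The plan is to follow the proof of Theorem \ref{thm:thm1} closely, replacing the hyperk\"ahler linear algebra by the pointwise K\"ahler identity in complex dimension $n=4$ and using the hypothesis $c_1(E)=0$ to identify $c_2(E)$ with $\beta(E)$. First I would make two reductions. Since $A$ is HYM with respect to $I$, we have $F_A\in\bigwedge^{1,1}_I(\End E)$ and $\Lambda_I F_A=\lambda\,\Id$; writing $F_A=F_A^o+\frac1r(\Tr F_A)\Id$ and taking the trace-free part shows that $\Lambda_I F_A^o$ is simultaneously trace-free and a scalar multiple of $\Id$, hence $\Lambda_I F_A^o=0$. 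Thus $F_A^o\in\triangle^{1,1}_{I,prim}(\End E)\subset\bigwedge^2_{21}(\End E)$, so $A$ is a spinstanton. Secondly, because $c_1(E)=0$ we have $c_2(E)=\beta(E)=[\frac{1}{8\pi^2}\Tr(F_A^o\wedge F_A^o)]$, so that $c_2(E)\cup\frac12[\omega_L]^2$ is computed by integrating $\frac{1}{8\pi^2}\Tr(F_A^o\wedge F_A^o)\wedge\frac12\omega_L^2$. On the flat torus each $\omega_L$, $L\in\cU'$, is a constant-coefficient (hence parallel) $2$-form, so this integrand may be evaluated pointwise.

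The heart of the argument is the pointwise K\"ahler identity for real $2$-forms, decomposed into types with respect to $L$:
$$
\alpha\wedge\alpha\wedge\tfrac12\omega_L^2=
\begin{cases}
+|\alpha|^2\,\vol, & \alpha\in\triangle^{2,0}_L,\\
-|\alpha|^2\,\vol, & \alpha\in\triangle^{1,1}_{L,prim},\\
+3\,|\alpha|^2\,\vol, & \alpha\in\la\omega_L\ra.
\end{cases}
$$
Expanding $F_A^o$ in an orthonormal basis of $\mathfrak{su}(r)$ and using that $\la B,C\ra=-\Tr(BC)$ is the positive-definite Killing metric (which reverses all three signs), I would obtain an expression of the form
$$
c_2(E)\cup\tfrac12[\omega_L]^2=\frac{1}{8\pi^2}\Big(\|F_A^o\|^2-a\,\|(F_A^o)^{2,0}_L\|^2-b\,\|\Lambda_L F_A^o\|^2\Big),
$$
with constants $a,b>0$, after eliminating the primitive-$(1,1)$ contribution via the orthogonal splitting $\|F_A^o\|^2=\|(F_A^o)^{2,0}_L\|^2+\|(F_A^o)^{1,1,prim}_L\|^2+\|(F_A^o)^{\omega_L}\|^2$ and the fact that $\|(F_A^o)^{\omega_L}\|^2$ is a positive multiple of $\|\Lambda_L F_A^o\|^2$. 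The exact values of $a,b$ are immaterial; what matters is that both correction terms are non-negative.

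The conclusion is then immediate, since the total norm $\|F_A^o\|^2=\int_M|F_A^o|^2\,\vol$ depends only on the fixed Riemannian metric and not on $L$. Hence the right-hand side is maximized exactly when $(F_A^o)^{2,0}_L=0$ and $\Lambda_L F_A^o=0$, i.e.\ when $F_A^o\in\triangle^{1,1}_{L,prim}(\End E)$. For $L=I$ this holds by the first reduction, so the maximum is attained and $c_2(E)\cup[\omega_L]^2\le c_2(E)\cup[\omega_I]^2$ for every $L\in\cU'$. Conversely, equality forces $F_A^o\in\triangle^{1,1}_{L,prim}(\End E)$; exactly as in the proof of the cohomological criterion above, since $c_1(E)=0$ one absorbs the trace part by the abelian gauge change $A\mapsto A+a\,\Id$ (which leaves $F_A^o$, and hence $c_2(E)$, unchanged) to conclude that $E$ is HYM with respect to $L$. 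Thus equality holds precisely when $E$ is HYM with respect to $L$.

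I expect the main obstacle to be purely computational: pinning down the signs and constants in the pointwise identity, in particular the factor coming from the real structure $\triangle^{2,0}_L=\Re(\bigwedge^{2,0}_L\oplus\bigwedge^{0,2}_L)$ and the sign reversal induced by the Killing form, and checking that these conspire so that both correction coefficients $a,b$ come out strictly positive (which is what makes the maximum characterization sharp). The remaining points---that the parallel forms $\omega_L$ on the flat torus legitimize passing from the cohomological cup product to a pointwise integral, and that the abelian gauge modification recovers genuine HYM from the traceless condition---are routine given the earlier results.
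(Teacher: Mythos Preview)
Your argument is correct and is genuinely different from the paper's. The paper does \emph{not} rerun the proof of Theorem~\ref{thm:thm1} in dimension $n=4$; instead it invokes \cite[Proposition~19]{Mu}, which gives the inequality $(\beta(E)-3k[\omega_I]^2)\cup[\gamma]^2\le 0$ for $\gamma\in\triangle^{2,0}_{I,+}$ (with $k=\beta(E)\cup[\omega_I]^2/[\omega_I]^4$), and then exploits the explicit $\Spin(7)$ parametrization $\omega_L=2(\omega_I+\gamma)/|\omega_I+\gamma|$ to expand $\beta(E)\cup[\omega_L]^2$ algebraically in terms of $\beta(E)\cup[\omega_I]^2$, $\beta(E)\cup[\omega_I]\cup[\gamma]$, and $\beta(E)\cup[\gamma]^2$, reducing everything to that cited inequality.

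Your route is more self-contained: it uses only the universal K\"ahler identity for $\alpha\wedge\alpha\wedge\tfrac12\omega_L^2$ and the metric-independence of $\|F_A^o\|^2$, exactly as in Theorem~\ref{thm:thm1}. This has two pleasant side effects. First, it makes no use of the $\Spin(7)$ decomposition $\bigwedge^2=\bigwedge^2_7\oplus\bigwedge^2_{21}$ or of the description of $\cU'$ via $\triangle^{2,0}_{I,+}$, so the same inequality holds for \emph{any} complex structure $L$ compatible with the flat metric, i.e.\ for $L\in\SO(8)/\U(4)$, not just for $L\in\Spin(7)/\SU(4)$. Second, it avoids the external reference to \cite{Mu}. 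What the paper's approach buys, on the other hand, is that the equality case is tied directly to the characterization in \cite[Proposition~19]{Mu} of traceless HYM in terms of the $\Spin(7)$ data, which feeds into the subsequent discussion of the sphere $S^r\subset S^6$ of rotable structures. Your concern about the signs and constants $a,b$ is well placed but harmless: for $n=4$ the computation in the proof of Theorem~\ref{thm:thm1} gives $a=3$, $b=4$ (both positive), and your final gauge-change step is legitimate since $c_1(E)=0$ forces $\Tr F_A$ to be exact.
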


\begin{proof}
Let us recall the result of \cite[Proposition 19]{Mu}. Consider 
 \begin{equation} \label{eqn:aaa2} 
  k= \frac{ \beta (E) \cup [\omega_I]^2}{ [\omega_I ]^4}\, .
 \end{equation}
Then 
 \begin{equation} \label{eqn:aaa}
  (\beta (E)-3k[\omega_I]^2)\cup [\gamma]^2 \leq 0,
 \end{equation}
for any $\gamma\in \triangle^{2,0}_{I,+}$. There is equality 
if and only if $E$ is traceless HYM with respect to $L$ with 
$\omega_L=2 \, \frac{\omega_I+\gamma}{|\omega_I+\gamma|}$.

Now let $\kappa^2=|\omega_I+\gamma|^2=4+|\gamma|^2$. So
 \begin{align*}
   \kappa^2 \, \beta(E)\cup [\omega_L]^2 &= 4\beta(E)\cup  [\omega_I+\gamma]^2\\ 
&= 4\beta(E)\cup \left( [\omega_I]^2 +
    2[\omega_I]\cup [\gamma] +[\gamma]^2 \right) \\
     & \leq 4\beta(E)\cup [\omega_I]^2 + 12 k [\omega_I]^2\cup [\gamma]^2 \\
     & = 4\beta(E)\cup [\omega_I]^2 + k |\gamma|^2 [\omega_I]^4 \\
     &=(4+|\gamma|^2) \beta(E)\cup [\omega_I]^2  \\ 
    &=  \kappa^2 \, \beta(E)\cup [\omega_I]^2,
 \end{align*}
using that $\beta(E)\cup [\omega_I]\cup [\gamma]=0$ in the second line,
(\ref{eqn:aaa}) in the third line, 
$[\omega_I]^2\cup [\gamma]^2=2|\gamma|^2 \frac{[\omega_I]^4}{4!}$
in the fourth line and the definition (\ref{eqn:aaa2}) of $k$ in the fifth line. Hence
 $$
 \beta(E) \cup [\omega_L]^2 \leq \beta (E) \cup [\omega_I]^2
 $$
with equality if and only if $E$ is traceless HYM with respect to $L$.
As $c_1(E)=0$, $\beta(E)=c_2(E)$ and $E$ is HYM with respect to $L$.
\end{proof}

This result determines a sphere $S^r \subset S^6$, where $0\leq r\leq 6$, 
(see \cite[Proposition 17]{Mu}), such that 
the bundle $E$ is rotable for the complex
structures in this sphere. The sphere $S^r$ can be
of different dimensions, depending on the bundle and manifold,
as the examples in \cite{Mu} show.

Moreover, there is an example in \cite{Mu} of
a complex torus $(M,\omega_I)$ and a rotable bundle $E\to (M,\omega_I)$
for which there is a rotated structure $L$ such that $(M,\omega_L)$ is,
as a complex torus, of very different nature: for instance
$(M,\omega_I)$ can be a decomposable complex abelian variety
and $(M,\omega_L)$ be an indecomposable complex abelian variety.

\subsection{Rotation of complex structures on tori}

For a $2n$-dimensional torus $M=\RR^{2n}/\Lambda$ (with a flat Riemannian metric),
we can consider the family of \emph{all} complex structures compatible with the metric.
This means that we take now $H=\{1 \}< G=\SO(2n)$. Let  $U_0=\U(n)< G$
be one complex structure $I$. Then
 \begin{align*}
 N &= \SO(2n), \\
 C &= \U(n).
 \end{align*}
The family of complex structures on $M$ is parametrized by
 $$
 \cU'=N/C=\SO(2n) /\U(n).
 $$

For a $4$-torus, $\cU'=\SO(4)/\U(2)\cong S^2$, and we recover the situation 
discussed previously for a hyperk\"ahler rotation. This is due to the fact that
a complex structure (a $\U(2)$-structure) determines uniquely an $\SU(2)$-structure.
So the rotations of complex structures for a $4$-torus are the same as the
ones obtained by considering it as hyperk\"ahler manifold. 
In \cite{Toma}, M. Toma has considered these rotations to construct new stable
bundles on complex $2$-tori.

For a $2n$-torus with $2n>4$, the situation is more complicated. For instance, for a
$6$-torus, the space 
 $$
 \cU'=\SO(6)/\U(3) \cong \CP^3\, .
 $$
This means that the orbit of $\omega\in \bigwedge^2$ under $\SO(6)$
is diffeomorphic to $\CP^3$. However, it is difficult to describe it explicitly,
since $\CP^3 \subset \bigwedge^2$ spans the whole of $\bigwedge^2$, as this is an irreducible $\SO(6)$-representation.
Moreover, if $E\to M$ is a vector bundle endowed with an HYM connection $A$ with respect to
$\omega$, then $F_A\in \bigwedge^{1,1}_I(\End E)$. For $A$ to be HYM with respect to some
other $L\in \cU'$, we need to check that $F_A \in\bigwedge^{1,1}_{L}(\End E)$. This is a 
condition to be checked at every point $p\in M$, giving a functional equation. 
In the case of $\Spin$-rotations for $8$-tori, the real 
power of Theorem \ref{thm:Spin-rot}
is that it gives a \emph{cohomological} condition for the functional equation 
$F_A \in\bigwedge^{1,1}_{L}(\End E)$ to hold everywhere.

If $E\to M$ is a bundle which is rotable for the whole family $\SO(2n)/U(n)$,
that is, which is HYM for all complex structures in the family $\SO(2n)/U(n)$, with
$n>2$, then $A$ is flat, i.e. $F_A=0$. This is shown in \cite{Ve3}.
Note that however, it is possible to have a bundle $E\to M$ which is rotable
for a subfamily $\cF\subset \SO(2n)/\U(n)$. For instance, take a $\Spin$-rotable
bundle (there are examples in \cite{Mu}) for a family $\cF\subset \Spin(7)/\SU(4)$.
Taking the image under the natural map
$\Spin(7)/\SU(4) \stackrel{\imath}{\too} \SO(8)/\U(4)$, we get a bundle
which is HYM for all complex structures in the family $\imath (\cF)\subset \SO(8)/\U(4)$.

\section{Product of two K3 surfaces}

Let $M, M'$ be two K3 surfaces. Then the holonomy of the manifold $X=M\x M'$ is
$H=\SU(2)\x \SU(2)< \SO(4)\x \SO(4) < G=\SO(8)$. 
Fix complex structures $I,I'$ on $M,M'$. This
determines groups $U_I=\U(2)<\SO(4)$, $U_{I'}
=\U(2)<\SO(4)$, and hence a subgroup $\U(2)\x \U(2) <\SO(4)$.
We have a unique 
 $$
 U=U_{\cI}=\U(4)<\SO(8)
 $$
given by the complex structure $\cI=I+I'$ on $X=M\x M'$.
Then 
 \begin{align*}
 N &=\SO(4)\x\SO(4), \\
 C &=\U(2)\x \U(2).
 \end{align*}
The quotient is
  $$
  \cU'=N/C =(\SO(4)/\U(2)) \x (\SO(4)/\U(2)) \cong S^2\x S^2.
  $$
If $I,J,K$ are the three complex structures of $M$ and 
$I',J',K'$ are the three complex structures of $M'$, then
$\cL=aI+bJ+cK + a'I'+b'J'+c'K'$, $(a,b,c), (a',b',c')\in S^2$,
is a complex structure in the family $\cU'$.

Write $\RR^8 =V\oplus V'$, where $V, V'\cong \RR^4$ correspond to the
two factors $M,M'$. Then there is a decomposition into five
irreducible components (under the group $N=\SO(4)\x \SO(4)$)
 \begin{equation}\label{eqn:dec}
 \bigwedge\nolimits^2 = \la \omega_I,\omega_J,\omega_K\ra \oplus 
 \triangle^{1,1}_{I,prim} V \oplus  \la \omega_{I'},\omega_{J'},\omega_{K'}\ra \oplus 
 \triangle^{1,1}_{I',prim} V' \oplus  D ,
 \end{equation}
where 
 $$
 D=\Re \left( \bigwedge\nolimits^{1,0}_I V\otimes \bigwedge\nolimits^{1,0}_{I'} V'\right) \oplus
 \Re \left( \bigwedge\nolimits^{1,0}_I V\otimes \bigwedge\nolimits^{0,1}_{I'} V' \right) .
 $$
Note that, for the complex structure $\cI=I+I'$, we have
 $$
  \triangle^{2,0}_{\cI}=
\la \omega_J,\omega_K\ra \oplus  \la \omega_{J'},\omega_{K'}\ra \oplus 
 \Re \left( \bigwedge\nolimits^{1,0}_I V\otimes \bigwedge\nolimits^{1,0}_{I'} V'\right).
  $$

\begin{lemma}\label{lem:ll}
Let $\alpha \in D$. For $(L,L') \in S^2\x S^2$, we have 
 $$
  - \int_X \alpha\wedge \alpha \wedge \omega_L \wedge \omega_{L'}  \leq 4 ||\alpha||^2\, ,
 $$
and equality holds if and only if $\alpha \in 
\Re \left( \bigwedge\nolimits^{1,0}_L V\otimes \bigwedge\nolimits^{0,1}_{L'} V' \right)$.
\end{lemma}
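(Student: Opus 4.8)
The plan is to recognise the left-hand side as a calibration functional for the auxiliary complex structure $\cL=L+L'$ on $X$, and to reduce the inequality to the pointwise K\"ahler identity for $2$-forms already used in the proof of Theorem \ref{thm:thm1}. Everything in sight is linear algebra on $T_pX=V\oplus V'$, so I would argue fibrewise and integrate against the (positive) volume form at the end.

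First I would record a bidegree observation. For $\alpha\in D\subset V^{*}\ox (V')^{*}$ every monomial of $\alpha$ carries exactly one factor from $V^{*}$ and one from $(V')^{*}$, so $\alpha\wedge\alpha$ is of pure bidegree $(2,2)$ with respect to the splitting of forms by $V$ and $V'$. Since $\omega_{\cL}=\omega_{L}+\omega_{L'}$ with $\omega_{L}\in\bigwedge^{2}V^{*}$ and $\omega_{L'}\in\bigwedge^{2}(V')^{*}$, the terms $\omega_{L}^{2}$ and $\omega_{L'}^{2}$ of $\omega_{\cL}^{2}$ annihilate $\alpha\wedge\alpha$ for dimension reasons, leaving
\[
\alpha\wedge\alpha\wedge\omega_{L}\wedge\omega_{L'}=\tfrac12\,\alpha\wedge\alpha\wedge\omega_{\cL}^{2}.
\]

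Next I would split $\alpha=\alpha_{+}+\alpha_{-}$ along the orthogonal decomposition defining $D$, so that $\|\alpha\|^{2}=\|\alpha_{+}\|^{2}+\|\alpha_{-}\|^{2}$. Exactly as recorded in the excerpt for $\cI$ (and verbatim for $\cL$), the piece $\alpha_{+}\in\Re(\bigwedge^{1,0}_{L}V\ox\bigwedge^{1,0}_{L'}V')$ is of type $(2,0)+(0,2)$ for $\cL$, while $\alpha_{-}\in\Re(\bigwedge^{1,0}_{L}V\ox\bigwedge^{0,1}_{L'}V')$ is of type $(1,1)$; moreover $D\perp\omega_{\cL}$, so $\Lambda_{\cL}\alpha=0$ and $\alpha_{-}$ is primitive. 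Applying the pointwise K\"ahler identity for $(X,\cL)$ (complex dimension $4$), in which $(2,0)+(0,2)$-forms enter with a positive sign and primitive $(1,1)$-forms with a negative sign, and combining it with the displayed relation, gives pointwise
\[
-\,\alpha\wedge\alpha\wedge\omega_{L}\wedge\omega_{L'}=c\,\bigl(\|\alpha_{-}\|^{2}-\|\alpha_{+}\|^{2}\bigr)\,\vol\ \le\ c\,\|\alpha\|^{2}\,\vol,
\]
for a positive constant $c$, with equality precisely when $\alpha_{+}=0$, i.e. when $\alpha\in\Re(\bigwedge^{1,0}_{L}V\ox\bigwedge^{0,1}_{L'}V')$. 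Integrating over $X$ then yields the statement. (Alternatively one may first use the $\SU(2)_{R}\x\SU(2)'_{R}$ action to reduce to $(L,L')=(I,I')$, since it preserves $D$, the volume form, and the two summands; but the argument above treats general $(L,L')$ directly.)

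The main obstacle is this third step. I must pin down the signs in the K\"ahler identity, namely that the $(2,0)+(0,2)$ part contributes $+\|\alpha_{+}\|^{2}$ and the primitive $(1,1)$ part $-\|\alpha_{-}\|^{2}$, and check that every cross-type product $\alpha_{+}\wedge\alpha_{-}\wedge\omega_{\cL}^{2}$ vanishes for bidegree reasons, so that the quadratic form is diagonal on the splitting of $D$. The remaining delicate point is bookkeeping rather than structure: keeping track of the factor $\tfrac12$ above together with the fixed (norm-$2$) normalization of the K\"ahler forms and the induced inner product on $2$-forms, so as to identify the constant $c$ with the stated value $4$.
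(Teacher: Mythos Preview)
Your argument is correct and rests on the same idea as the paper's: split $\alpha\in D$ into its $\cL$-type pieces $\alpha_{+}\in\triangle^{2,0}_{\cL}$ and $\alpha_{-}\in\triangle^{1,1}_{\cL,prim}$, observe the opposite signs in the quadratic form $\alpha\wedge\alpha\wedge\omega_L\wedge\omega_{L'}$, and conclude that the maximum of the negative is attained exactly when $\alpha_{+}=0$. The paper carries this out by expanding $\alpha$ explicitly in $(1,0)$/$(0,1)$ components on each factor and computing $\int_X \alpha\wedge\alpha\wedge\omega_L\wedge\omega_{L'}=4\|\alpha_{+}\|^2-4\|\alpha_{-}\|^2$ directly, which delivers the constant $4$ at once. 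Your route differs only in packaging: the bidegree remark $\alpha\wedge\alpha\wedge\omega_L\wedge\omega_{L'}=\tfrac12\,\alpha\wedge\alpha\wedge\omega_{\cL}^{2}$ lets you import the K\"ahler identity from the proof of Theorem~\ref{thm:thm1} instead of computing from scratch, at the price of leaving the constant $c$ as a normalization check. That reformulation is a genuine (if small) gain in conceptual clarity, and your parenthetical remark that the $\SU(2)_R\times\SU(2)'_R$ action reduces to $(L,L')=(I,I')$ is also a legitimate alternative; the paper does neither, opting for the bare-hands computation.
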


\begin{proof}
 Write $\alpha=\alpha_1+\alpha_2=\sum (a_i\wedge a_i'+\bar{a}_i\wedge \bar{a}'_i) + 
\sum (b_i\wedge \bar{b}'_i+\bar{b}_i\wedge {b}'_i)$, where 
 $a_i,b_i\in \bigwedge^{1,0}_I V $ and $a_i',b_i' \in  \bigwedge^{1,0}_{I'} V'$.
We have that $a_i\wedge \bar{a}_i \wedge \omega_L= -2\imat |a_i|^2 \vol_M$ and
 $a_i'\wedge \bar{a}'_i \wedge \omega_{L'}= -2\imat |a_i'|^2 \vol_{M'}$.
Then 
 \begin{align*}
  \int_X \alpha&\wedge \alpha \wedge \omega_L \wedge \omega_{L'} \\ &=
2 \sum \int_X a_i\wedge a_i'\wedge \bar{a}_j\wedge \bar{a}'_j \wedge \omega_L \wedge \omega_{L'}+
2 \sum \int_X b_i\wedge \bar{b}_i'\wedge \bar{b}_j\wedge {b}'_j \wedge \omega_L \wedge \omega_{L'} \\
&=
-2 \sum \int_X a_i\wedge \bar{a}_j\wedge a_i'\wedge \bar{a}'_j \wedge \omega_L \wedge \omega_{L'}+
2 \sum \int_X b_i\wedge \bar{b}_j\wedge {b}'_j \wedge \bar{b}_i'\wedge \omega_L \wedge \omega_{L'} \\
 &= 4\int_X |\alpha_1|^2 \vol_X -4 \int_X |\alpha_2|^2 \vol_X 
=  4 || \alpha_1||^2 - 4 ||\alpha_2||^2 .
\end{align*}
So $-\int_X \alpha \wedge \alpha \wedge \omega_L \wedge \omega_{L'}  =
4 || \alpha_2||^2 - 4 ||\alpha_1||^2 \leq 4||\alpha||^2$ and equality happens for $\alpha_1=0$.
The result follows. 
\end{proof}

Suppose that $E\to X$ is a complex vector bundle with a connection $A$ which is HYM with respect to 
${\cI}=I+ {I'}$. Then $F_A\in \bigwedge^{1,1}_{\cI}(\End E)$ 
and $\Lambda_{\cI}  F_A=\tilde\lambda \Id$.
Let 
 \begin{align*}
 c_1(E) &=a+a'\in H^2(X)=H^2(M)\oplus H^2(M'), \\
 c_2(E) &=b+b'+ y \in H^4(X)=H^4(M)\oplus H^4(M')\oplus (H^2(M)\ox H^2(M')).
 \end{align*}
Let also $\lambda=a\cup [\omega_I]/[\omega_I]^2$, $\lambda'=a'\cup [\omega_{I'}]/[\omega_{I'}]^2$,
so $\tilde \lambda=\frac{\lambda+\lambda'}{2}$. 
The following result tells us when $E$ is rotable.

\begin{thm} \label{thm:thm3}
$E$ is rotable only in the following cases:
\begin{itemize}
\item $y=0$, 
$\lambda=\lambda'=0$. The
rotations are given by the family $S^2\x S^2$.
\item $y=0$, $\lambda=0$, $\lambda'\neq 0$. The rotations are given 
by the family $S^2\x \{\pm I'\}$.
\item $y=0$, $\lambda \neq 0$, $\lambda'= 0$. The rotations are given 
by the family $\{\pm I\}\x S^2$.
\item $y\neq 0$, $\lambda=\lambda'=0$. Then $E$ is rotable for those $\cL=L+L'\in S^2\x S^2$ 
such that  
 $$
 c_2(E) \cup [\omega_L]\cup [\omega_{L'}]=
 c_2(E) \cup [\omega_I]\cup [\omega_{I'}].
 $$
This family is either an $S^2$ embedded diagonally in
$S^2\x S^2$, or else $E$ is not rotable.
\end{itemize}
\end{thm}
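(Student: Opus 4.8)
The plan is to expand the curvature $F_A$ along the $N$-irreducible splitting (\ref{eqn:dec}) and to test, summand by summand, when $F_A$ stays of type $(1,1)$ after a rotation $\cL=L+L'$. Since $A$ is HYM with respect to $\cI$, the components of $F_A$ in $\triangle^{2,0}_{\cI}$ vanish, i.e.\ the $\omega_J,\omega_K,\omega_{J'},\omega_{K'}$-parts and the $\Re(\bigwedge\nolimits^{1,0}_IV\otimes\bigwedge\nolimits^{1,0}_{I'}V')$-part of $D$ are zero, leaving
\begin{equation*}
F_A=f\,\omega_I+F_{prim}+f'\,\omega_{I'}+F'_{prim}+\Phi,
\end{equation*}
with $f,f'$ functions in $\End E$, the two middle terms valued in the primitive summands $\triangle^{1,1}_{I,prim}V$ and $\triangle^{1,1}_{I',prim}V'$, and $\Phi\in\Re(\bigwedge\nolimits^{1,0}_IV\otimes\bigwedge\nolimits^{0,1}_{I'}V')(\End E)$. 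As in the K3 case of Section \ref{sec:1}, the two primitive terms are anti-self-dual on their factor, hence of type $(1,1)_{L}$ (resp.\ $(1,1)_{L'}$) for \emph{every} $L$ (resp.\ $L'$); they never obstruct a rotation, so rotability is controlled by $f\,\omega_I$, $f'\,\omega_{I'}$ and $\Phi$ alone.

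Next I would treat the two K\"ahler directions. Because $\omega_I\notin\langle\omega_L\rangle$ unless $L=\pm I$, the term $f\,\omega_I$ is of type $(1,1)_{\cL}$ if and only if $f\equiv0$ or $L=\pm I$, and symmetrically for $f'\,\omega_{I'}$. Since $\det E$ is Hermite--Einstein, its curvature $\Tr F_A$ is harmonic (by the $\partial\bar\partial$-lemma applied to the Hermite--Einstein equation), so each block of $\Tr F_A$ in the parallel splitting (\ref{eqn:dec}) is harmonic; in particular $\Tr f$ is a \emph{constant}, a fixed multiple of $\lambda$, and likewise $\Tr f'$ of $\lambda'$. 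Hence $f\equiv0$ forces $\lambda=0$, and conversely $\lambda\neq0$ keeps the first factor pinned to $\{\pm I\}$; the same holds for $\lambda'$ and the second factor. I would also record $\Lambda_{\cI}F_A=2(f+f')=\tilde\lambda\,\Id$.

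For the mixed term I would use the calibration method of Theorems \ref{thm:thm1} and \ref{thm:Spin-rot}. By degree reasons only $y$ pairs nontrivially, so $c_2(E)\cup[\omega_L]\cup[\omega_{L'}]=y\cup[\omega_L]\cup[\omega_{L'}]$, and representing $c_2$ by Chern--Weil the $(L,L')$-dependence is carried entirely by $\Phi$. The $\End E$-valued form of Lemma \ref{lem:ll}, with the Killing form $\langle B,C\rangle=-\Tr(BC)$ as in the proof of Theorem \ref{thm:thm1}, then gives
\begin{equation*}
c_2(E)\cup[\omega_L]\cup[\omega_{L'}]\leq c_2(E)\cup[\omega_I]\cup[\omega_{I'}]=\frac{1}{2\pi^2}\|\Phi\|^2,
\end{equation*}
with equality if and only if $\Phi$ is pointwise of type $(1,1)_{\cL}$, i.e.\ if and only if $E$ is HYM with respect to $\cL$; evaluating at $(I,I')$ also shows $y=0\Leftrightarrow\Phi\equiv0$.

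It remains to assemble the cases. If $y=0$ then $\Phi\equiv0$ and rotability is governed by $f,f'$: a rotable $E$ must kill the block of any factor that moves, and the three families $S^2\times S^2$, $S^2\times\{\pm I'\}$, $\{\pm I\}\times S^2$ follow according to which of $\lambda,\lambda'$ vanish. If $y\neq0$ then $\Phi\neq0$ is genuinely mixed, so it can remain of type $(1,1)_{\cL}$ only if \emph{both} factors rotate (with one factor fixed, $\bigwedge\nolimits^{1,0}_L V\cap\bigwedge\nolimits^{1,0}_I V=0$ forces $\Phi\equiv0$); hence $f\equiv f'\equiv0$, $\lambda=\lambda'=0$, and the family is the equality locus above. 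The hard part is to identify this locus. I would argue pointwise: for fixed $p$ with $\Phi(p)\neq0$, the $\Sp(1)\times\Sp(1)$-action on $D$ shows that $\{(L,L'):\Phi(p)\in\Re(\bigwedge\nolimits^{1,0}_LV\otimes\bigwedge\nolimits^{0,1}_{L'}V')\}$ is the graph of an orthogonal map $R_p:S^2\to S^2$ exactly when $\Phi(p)$ is a multiple of an $\Sp(1)$-equivariant isometry intertwining the two factors, and is discrete otherwise. Intersecting over $p$, and using that two distinct diagonals in $S^2\times S^2$ meet only in $\{\pm\}$, the family is a single diagonally embedded $S^2$ when $R_p$ is independent of $p$ (equivalently when $y$ is a correspondence of Hodge-isometry type, cf.\ \cite{Sc}) and reduces to $\{\pm\cI\}$ otherwise. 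This representation-theoretic globalization is the main obstacle.
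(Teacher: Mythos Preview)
Your decomposition of $F_A$, your treatment of the primitive pieces, and your handling of the $\omega_I$, $\omega_{I'}$ components via $\lambda,\lambda'$ all match the paper's argument. The use of Lemma~\ref{lem:ll} to translate ``$\Phi$ is of type $(1,1)_\cL$'' into the equality $c_2(E)\cup[\omega_L]\cup[\omega_{L'}]=c_2(E)\cup[\omega_I]\cup[\omega_{I'}]$ is also exactly what the paper does.

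The divergence is in the fourth case, and there your route is genuinely different and incomplete. The paper does \emph{not} analyse $\Phi$ pointwise. Instead it observes that
\[
\Psi(L,L')\;=\;-\frac{1}{8\pi^2}\int_X\langle F_5\wedge F_5\rangle\wedge\omega_L\wedge\omega_{L'}
\]
is \emph{bilinear} in $(\omega_L,\omega_{L'})\in\RR^3\times\RR^3$, so after a simultaneous orthogonal change of bases it is diagonal with entries $m_1\ge m_2\ge m_3$. The maximum locus on $S^2\times S^2$ is then read off from the multiplicity of $m_1$. The decisive step is purely global: since $F_5$ is of type $(1,1)_{\cI}$ one has $F_5\wedge F_5\wedge(\omega_J+\sqrt{-1}\,\omega_K)\wedge(\omega_{J'}+\sqrt{-1}\,\omega_{K'})=0$, and taking real parts forces $m_2=m_3$. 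This single identity reduces the possibilities to $m_1>m_2=m_3$ (not rotable) or $m_1=m_2=m_3$ (a diagonal $S^2$), with no pointwise analysis and no globalization problem.

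Your pointwise scheme has two real difficulties. First, $\Phi(p)$ takes values in $\End E_p$, not in $\RR$, so the description ``$\Phi(p)$ is a multiple of an $\Sp(1)$-equivariant isometry'' does not literally make sense; one would have to control each $\lu(r)$-component separately and then argue they all single out the \emph{same} graph $R_p$. Second, even granting a well-defined $R_p$ at each point, the passage from $\bigcap_p\mathrm{graph}(R_p)$ to ``either a single diagonal $S^2$ or $\{\pm\cI\}$'' is precisely the globalization you flag as the main obstacle, and nothing in your outline supplies it. The paper bypasses both issues by never leaving the level of the integrated bilinear form $\Psi$; I would recommend replacing your last paragraph with that argument.
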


\begin{proof}
We decompose $F_A=F_1+F_2+F_3+F_4+F_5$ according to (\ref{eqn:dec}). Let 
$(L,L') \in S^2\x S^2$ be another complex structure. We have to see if $F_1, F_3$ and $F_5$
are of type $(1,1)$ with respect to $\cL=L+L'$.

We start by noticing that $F_2\wedge\omega_L=0$ and $F_4\wedge\omega_{L'}=0$ for any $(L,L')$.
Also $a\cup [\omega_L]=\frac{\imat}{2\pi} \int_M \Tr(F_1) \wedge \omega_L=
\frac{\imat}{2\pi} r \lambda [\omega_I]\cup [\omega_L]$, where $r=\rk (E)$.
Analogously, $a'\cup [\omega_{L'}]=
\frac{\imat}{2\pi} r \lambda' [\omega_{I'}]\cup [\omega_{L'}]$.
Then
 \begin{align*}
  c_2(E)\cup [\omega_L]\cup [\omega_{L'}] &=
  \frac{1}{8\pi^2} \int_X \Tr( F \wedge F) \wedge \omega_L \wedge \omega_{L'} \\
 &=  \frac{1}{8\pi^2} \int_X \Tr(F_5\wedge F_5) \wedge \omega_L \wedge \omega_{L'}
 +\frac{2}{8\pi^2} \int_X \Tr(F_1\wedge F_3 ) \wedge \omega_L \wedge \omega_{L'} \\
 &=
 -\frac{1}{8\pi^2} \int_X \la F_5\wedge F_5 \ra \wedge \omega_L \wedge \omega_{L'}
 -\frac{1}{4\pi^2} r \lambda \lambda' ([\omega_I]\cup [\omega_L])([\omega_{I'}]\cup [\omega_{L'}]),
\end{align*}
using that $\la A,B\ra = -\Tr(AB)$ is the Killing form on ${\frak u}(r)$, the Lie algebra of $\U(r)$.

Regarding the components $F_1,F_3$, we have that $F_1=\lambda \omega_I\Id$, $F_3=\lambda'\omega_{I'}\Id$.
If $\lambda,\lambda' \neq 0$, then $F_1,F_3$ are of type $(1,1)$ with respect to $\cL=L+L'$ only for the
complex structures $\pm I\pm I'$. Therefore $E$ is not rotable.

If $\lambda\lambda' =0$, then the formula above and
Lemma \ref{lem:ll} say that
$F_5 \in \Re \left( \bigwedge\nolimits^{1,0}_L V\otimes \bigwedge\nolimits^{0,1}_{L'} V' \right)(\End E)$
if and only if $c_2(E)\cup [\omega_L]\cup [\omega_{L'}]$ achieves its maximum.
Considering
 \begin{eqnarray*}
 \Psi: S^2 \x S^2 & \too & \RR \\
 (\omega_L,\omega'_L) & \mapsto & - \frac{1}{8\pi^2} \int_X \la F_5\wedge F_5\ra \wedge \omega_L \wedge \omega_{L'}\, , 
\end{eqnarray*}
the maximum is achieved for $(\omega_I,\omega_I')$, by assumption. Note that
$\Psi$ is bilinear (when considered as a functional on $\RR^3\x \RR^3$). It is easy to see that
we can choose an
orthonormal basis (that we shall call $\{I,J,K\}, \{I',J',K'\}$ again) in which $\Psi$ has matrix
 $$
 \left(\begin{array}{ccc} m_1 & 0 & 0 \\ 0 & m_2 & 0 \\ 0 & 0 & m_3 \end{array}\right),
 $$ 
with $m_1\geq m_2 \geq m_3$. 
If $m_1>m_2$ then $\Psi(\omega_L,\omega'_L)=m_1$ only for $\pm (I+I')$. If $m_1=m_2>m_3$ then
$\Psi(\omega_L,\omega_{L'})=m_1$ for $L=a I+bJ$, $L'=a I'+b J'$, for $a^2+b^2=1$.
Finally, if $m_1=m_2=m_3>0$ then 
$\Psi(\omega_L,\omega_{L'})=m_1$ for $L=a I+bJ+cK$, $L'=a I'+b J'+cK'$, for $a^2+b^2+c^2=1$.

\begin{rmk}
Note that $\Psi\neq 0$ if and only if $m_1\neq 0$. This is the same as to say $c_2(E)\cup [\omega_I]\cup
[\omega_{I'}]\neq 0$, i.e., $y\cup [\omega_I]\cup
[\omega_{I'}]\neq 0$. In particular, $y\neq 0 \iff 
y\cup [\omega_I]\cup
[\omega_{I'}]\neq 0$.
\end{rmk}

Now, if either $\lambda=0,\lambda'\neq 0$ or $\lambda \neq 0,\lambda'= 0$ then
looking at the components $F_1,F_3$, we have that $E$ is rotable 
only for $\cL=L \pm I'$, $L\in S^2$, in the first case, or
 $\cL=\pm I +L'$, $L'\in S^2$, in the second case.
But then looking at $F_5$, it must be $y=0$ (this implying that $F_5 \equiv 0$).

If $\lambda=\lambda'=0$, then $F_1, F_3=0$. So we only need to check that $F_5$ is of
type $(1,1)$ with respect to $\cL=L+L'$. By the discussion above this happens exactly when
 $$
  c_2(E)\cup [\omega_L]\cup [\omega_{L'}] = c_2(E)\cup [\omega_I]\cup [\omega_{I'}] .
 $$

Choose the basis $\{I,J,K\}$ and $\{I',J',K'\}$ as above. Then 
$E$ is rotable for
those $\cL=L+L'=a (I+I')+b(J+J')+c(K+K')$ such that 
$y\cup [\omega_L]\cup [\omega_{L'}] =y\cup [\omega_I]\cup [\omega_{I'}]$. 
As $F_5\in D (\End E)$ is of type $(1,1)$ with respect to $\cI=I+I'$, we have that 
  $$
 F_5\wedge F_5 \wedge (\omega_J+\imat \omega_K) \wedge (\omega_{J'}+\imat\omega_{K'})=0,
 $$
because $\omega_J+\imat \omega_K$ is of type $(2,0)$. This means that
$F_5\wedge F_5\wedge \omega_J\wedge\omega_{J'}=
F_5\wedge F_5\wedge \omega_K\wedge\omega_{K'}$, implying that $m_2=m_3$.
This means that either $E$ is not rotable, or $E$ is rotable by an $S^2$ family
embedded diagonally in $S^2\x S^2$.
\end{proof}

The rotability of $E$ can be expressed in terms of the
structure of holomorphic symplectic manifold. Recall that $\Omega_I= \omega_J+\imat \omega_K$,
$\Omega_{I'}= \omega_{J'}+\imat \omega_{K'}$ and $\Omega_{\cI}=\Omega_I+
\Omega_{I'}$. We have the following

\begin{cor}
Let $E$ be a hermitian vector bundle which is HYM with respect to $\cI=I+I'$. Suppose
that $\lambda=\lambda'=0$. Then $E$ is rotable if and only if 
 $$
 2 c_2(E) \cup [\omega_{\cI}]^2= c_2(E) \cup [\Omega_{\cI}]\cup [\overline{\Omega}_{\cI}].
 $$
\end{cor}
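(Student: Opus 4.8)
The plan is to expand both sides of the asserted identity in the orthonormal frame $\{I,J,K\},\{I',J',K'\}$ fixed in the proof of Theorem~\ref{thm:thm3}, namely the one in which the bilinear form $\Psi$ is diagonal. Since that frame is obtained by rotating the pairs $(J,K)$ and $(J',K')$ only, while leaving $I,I'$ fixed, the complex structure $\cI=I+I'$, its K\"ahler form $\omega_{\cI}=\omega_I+\omega_{I'}$, and the holomorphic symplectic form $\Omega_{\cI}=\Omega_I+\Omega_{I'}$ keep their meaning, and the surviving mixed Chern numbers become the eigenvalues
$$m_1=c_2(E)\cup[\omega_I]\cup[\omega_{I'}],\quad m_2=c_2(E)\cup[\omega_J]\cup[\omega_{J'}],\quad m_3=c_2(E)\cup[\omega_K]\cup[\omega_{K'}],$$
with all off-diagonal mixed products $c_2(E)\cup[\omega_I]\cup[\omega_{J'}]$, etc., equal to zero. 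The goal is to show that the difference of the two sides is a fixed multiple of $2m_1-m_2-m_3$, and then to quote the numerical criterion established in Theorem~\ref{thm:thm3}.

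Writing $\omega_{\mathcal{J}}=\omega_J+\omega_{J'}$ and $\omega_{\mathcal{K}}=\omega_K+\omega_{K'}$ one has $\Omega_{\cI}=\omega_{\mathcal{J}}+\imat\,\omega_{\mathcal{K}}$, and since two-forms commute,
$$\Omega_{\cI}\wedge\overline{\Omega}_{\cI}=\omega_{\mathcal{J}}^2+\omega_{\mathcal{K}}^2.$$
Expanding the squares, using the hyperk\"ahler identity $\omega_J^2=\omega_K^2=\omega_I^2$ on $M$ and $\omega_{J'}^2=\omega_{K'}^2=\omega_{I'}^2$ on $M'$, together with the fact that forms pulled back from the two factors commute, this becomes
$$\Omega_{\cI}\wedge\overline{\Omega}_{\cI}=2\,\omega_I^2+2\,\omega_{I'}^2+2\,\omega_J\wedge\omega_{J'}+2\,\omega_K\wedge\omega_{K'},$$
while $\omega_{\cI}^2=\omega_I^2+2\,\omega_I\wedge\omega_{I'}+\omega_{I'}^2$.

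I would then cup with $c_2(E)=b+b'+y$ and discard the terms that vanish by bidegree on $X=M\times M'$: against a four-form pulled back from $M$ only $b'$ survives, against one from $M'$ only $b$, and against a product of a two-form from each factor only $y$. Setting $S_1=c_2(E)\cup[\omega_I]^2$ and $S_2=c_2(E)\cup[\omega_{I'}]^2$, one obtains $c_2(E)\cup[\Omega_{\cI}]\cup[\overline{\Omega}_{\cI}]=2S_1+2S_2+2m_2+2m_3$ and $2\,c_2(E)\cup[\omega_{\cI}]^2=2S_1+2S_2+4m_1$, hence
$$2\,c_2(E)\cup[\omega_{\cI}]^2-c_2(E)\cup[\Omega_{\cI}]\cup[\overline{\Omega}_{\cI}]=2\,(2m_1-m_2-m_3).$$
In the proof of Theorem~\ref{thm:thm3} it is shown that $m_2=m_3$ always holds (because the component $F_5$ is of type $(1,1)$ with respect to $\cI$) and that, for $\lambda=\lambda'=0$, the bundle $E$ is rotable if and only if $m_1=m_2=m_3$. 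Thus the right-hand side equals $4(m_1-m_2)$, so it vanishes exactly when $m_1=m_2=m_3$; that is, the stated identity holds if and only if $E$ is rotable.

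The computation is essentially bookkeeping, but two points need care and constitute the only real obstacle. First, one must evaluate $\Omega_{\cI}$ in the frame adapted to $\Psi$, so that the surviving mixed numbers really are the eigenvalues $m_2,m_3$ and not off-diagonal entries; in an arbitrary frame the cross terms $\omega_J\wedge\omega_{K'}$ still cancel in $\Omega_{\cI}\wedge\overline{\Omega}_{\cI}$, but the clean identification with the criterion of Theorem~\ref{thm:thm3} would be lost. Second, the identification of $m_1=c_2(E)\cup[\omega_I]\cup[\omega_{I'}]$ with the \emph{maximum} of $\Psi$ relies on $A$ being HYM with respect to $\cI$, through the calibration inequality $c_2(E)\cup[\omega_{\cL}]^2\le c_2(E)\cup[\omega_{\cI}]^2$ of Theorem~\ref{thm:thm1}. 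The remaining inputs, namely the single-factor hyperk\"ahler identity and the bidegree vanishing dictated by (\ref{eqn:dec}), are routine.
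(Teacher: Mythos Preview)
Your argument is correct and follows essentially the same route as the paper's: expand both sides in the diagonal frame for $\Psi$ and reduce the identity to $m_1=m_2=m_3$, then invoke Theorem~\ref{thm:thm3}. Your bookkeeping is in fact tighter than the paper's displayed computation, which writes $F_A\wedge F_A\wedge\Omega_{\cI}\wedge\overline\Omega_{\cI}=2\,\Re(F_5\wedge F_5\wedge\Omega_I\wedge\overline\Omega_{I'})$ and thereby silently drops the pure-factor pieces $F_2^2\wedge\Omega_{I'}\overline\Omega_{I'}$ and $F_4^2\wedge\Omega_I\overline\Omega_I$; you track the corresponding cohomological terms $S_1,S_2$ and show they cancel against their counterparts in $2\,c_2(E)\cup[\omega_{\cI}]^2$, which is the cleaner way to see why the final criterion is unaffected.
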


\begin{proof}
We have that 
  \begin{align*}
 F_A\wedge F_A \wedge\Omega_{\cI}\wedge \overline{\Omega}_{\cI} &=
  2 \Re(F_5 \wedge F_5 \wedge (\omega_J+\sqrt{-1}\omega_K) 
\wedge (\omega_{J'}- \sqrt{-1} \omega_{K'})) \\
 &=  2 F_5 \wedge F_5 \wedge \omega_J \wedge \omega_{J'} +
 2 F_5 \wedge F_5 \wedge \omega_K \wedge \omega_{K'} \, .
\end{align*}
So 
 $$
 c_2(E)\cup [\Omega_{\cI}]\cup [\overline{\Omega}_{\cI}]=m_2+m_3.
 $$
Then the condition of the statement is equivalent to $m_1=m_2=m_3$, which is equivalent
to rotability, by Theorem \ref{thm:thm3}.
\end{proof}

\begin{rmk}
Assume that $a,a'$ are primitive forms. Then 
we have a Bogomolov type inequality: $c_2(E) \cup [\omega_\cI]^2\geq 0$, and 
this is equal to $0$ if and only if $c_2(E) =b+b'$.
\end{rmk}

In \cite{Sc}, Schlickewei uses these rotations for the self-product of a K3
surface, $X=M\x M$, but considering only complex structures which are 
self-products of a complex structure on the K3 surface, that is, restricting
consideration to the diagonal
$\Delta \subset S^2\x S^2$. Such $X$ can be treated then as a hyperk\"ahler
manifold with the arguments of Section \ref{sec:1}.

\end{document}